\theoremstyle{plain} \newtheorem{theorem}{Theorem}
\theoremstyle{plain} \newtheorem{lemma}{Lemma}[section]
\theoremstyle{remark} \newtheorem{remark}[lemma]{Remark}
\theoremstyle{plain} \newtheorem{proposition}[lemma]{Proposition}
\theoremstyle{plain} \newtheorem{corollary}[lemma]{Corollary}
\theoremstyle{plain} \newtheorem{claim}[lemma]{Claim}
\theoremstyle{definition} \newtheorem{definition}[lemma]{Definition}
\theoremstyle{definition} 
\theoremstyle{definition} 
\newcommand{\adim}{\mathrm{D}}
\newcommand{\vdim}{d}
\newcommand{\edim}{\mathrm{D_{e}}}
\newcommand{\ch}{\mathrm{ch}}
\newcommand{\ha}{\hat{a}}
\newcommand{\eisp}[2]{M({#1},{#2})}
\newcommand{\onea}{\mathrm{1A}}
\newcommand{\honea}{\widehat{\onea}}
\newcommand{\twoa}{\mathrm{2A}}
\newcommand{\htwoa}{\widehat{\twoa}}
\newcommand{\threec}[1]{\mathrm{3C}({#1})}
\newcommand{\hthreec}[1]{\widehat{\mathrm{3C}}({#1})}
\newcommand{\fournp}[1]{\widehat{\mathrm{3C}^{00}}({#1})}
\newcommand{\bfournp}[1]{\mathrm{3C}^{00}(#1)}
\newcommand{\bfournpodd}[1]{\widehat{\mathrm{3C}^0}(#1)}
\newcommand{\bfournpmin}[1]{\mathrm{3C}^0({#1})}
\newcommand{\infalgnp}{\mathrm{Z}_{\textrm{np}}(\frac{1}{2})}
\newcommand{\chf}[1]{\ch\,\mathbb{#1}}
\newcommand{\1}{\mbox{1}\hspace{-0.25em}\mbox{l}}
\newcommand{\axdsum}[4]{\bigoplus_{#3=#1}^{#2}{#4}a_{#3}}
\newcommand{\haxdsum}[4]{\bigoplus_{#3=#1}^{#2}{#4}\ha_{#3}}
\newcommand{\ad}{\mathrm{ad}}
\newcommand{\frulej}[2]{\mathcal{F}_{#1}({#2})}
\newcommand{\chrg}[2]{\mathcal{X}_{#1}({#2})}
\newcommand{\catdecij}[3]{{#3}\text{-}\mathbf{Dec}_{{#1},{#2}}}
\newcommand{\cataxi}[2]{{#2}\text{-}\mathbf{Ax}_{#1}}
\newcommand{\cataxdeci}[3]{({#2},{#3})\text{-}\mathbf{AxDec}_{#1}}
\newcommand{\catdec}[1]{{#1}\text{-}\mathbf{Dec}}
\newcommand{\catax}[1]{{#1}\text{-}\mathbf{Ax}}
\newcommand{\cataxdec}[2]{({#1},{#2})\text{-}\mathbf{AxDec}}
\newcommand{\pr}{\mathrm{pr}}
\title{Universal decomposition algebras and the classification of 2-generated non-primitive axial algebras of Jordan type} 
\author{Takahiro Yabe\thanks{
Graduate School of Mathematical Sciences, The University of Tokyo
3-8-1, Komaba, Meguro-ku, Tokyo 153-8914, Japan, email: tyabe@ms.u-tokyo.ac.jp}}
\date{}
\begin{document}
\maketitle

\begin{abstract}\noindent
The classes of decomposition algebras, axial decomposition algebras and non-primitive axial algebras are shown to have universal algebras.
Furthermore, by using the existence of a universal algebra, 2-generated non-primitive axial algebras of Jordan type are classified.
\end{abstract}

\section{Introduction}
In \cite{hrs13}, J.\ I.\ Hall et al.\ introduced a class of commutative nonassociative algebras called axial algebras.
An axial algebra is generated by \textit{axes}, a distinguished set of idempotents, subject to a condition called a \textit{fusion rule} given in terms of the eigenvalues of the axes. 
For a fusion rule $\mathcal{F}$ and an algebra $A$, an idempotent $a$ of $A$ is called an {\it $\mathcal{F}$-axis}\/ if its action on $A$ is semisimple and the eigenspaces obey the fusion rule $\mathcal{F}$.
An algebra generated by $\mathcal{F}$-axes is called an {\it $\mathcal{F}$-axial algebra}.
There exist some results about the classification of axial algebras of various types.

In that paper, they constructed the universal $n$-generated Frobenius \textit{primitive} $\mathcal{F}$-axial algebra.
Here, an axis $a$ is said to be primitive if its 1-eigenspace is $\mathbb{F}a$ and an axial algebra is said to be primitive if it is generated by primitive axes and an algebra is said to be Frobenius if there exists a non-zero associative bilinear form.  
By using the universal algebra, they gave another proof of the classification given in \cite{i09} or \cite{s07} of 2-generated primitive axial algebras satisfying the fusion rule called Ising fusion rule as Table \ref{tabfrulei}.
Such algebras are called Norton-Sakuma algebras.
\begin{table}[h]
\begin{center}
\begin{tabular}{c|cccc}
$\star$&0&1&$\frac{1}{4}$&$\frac{1}{32}$\\ \hline
0&0&0&$\frac{1}{4}$&$\frac{1}{32}$\\ 
1&0&0&$\frac{1}{4}$&$\frac{1}{32}$\\ 
$\frac{1}{4}$&$\frac{1}{4}$&$\frac{1}{4}$&$\{0,1\}$&$\frac{1}{32}$\\ 
$\frac{1}{32}$&$\frac{1}{32}$&$\frac{1}{32}$&$\frac{1}{32}$&$\{0,1,\frac{1}{4}\}$
\end{tabular}
\caption{The Ising fusion rule}
\label{tabfrulei}
\end{center}
\end{table}

In \cite{hrs15}, primitive axial algebras of Jordan type $\eta$ were defined by J.\ I.\ Hall et al. and the classification with 2-generated case is completed.
Axial algebras of Jordan type obey the fusion rule as Table \ref{tabfrulej}. 
 \begin{table}[h]
\begin{center}
\begin{tabular}{c|ccc}
$\star$&0&1&$\eta$\\ \hline
0&0&$\emptyset$&$\eta$\\ 
1&$\emptyset$&1&$\eta$\\  
$\eta$&$\eta$&$\eta$&$\{0,1\}$
\end{tabular}
\caption{The Jordan type fusion rule}
\label{tabfrulej}
\end{center}
\end{table}

In \cite{r14}, F.\ Rehren deformed Norton-Sakuma algebras to axial algebras satisfying the fusion rule as Table \ref{tabfrulem}.
We say that an axial algebras is \textit{of Majorana type} $(\xi,\eta)$ if it satisfies such a fusion rule.
In \cite{y20}, we classified dihedral 2-generated primitive axial algebras of Majorana type $(\xi,\eta)$ in the case when $(\xi,\eta)\neq(\frac{1}{2},2)$ or the base field is not of characteristic 5.
We say that a 2-generated axial algebra is \textit{dihedral} if there exists an automorphism which switches the two generating axes.
\begin{table}[h]
\begin{center}
\begin{tabular}{c|cccc}
$\star$&0&1&$\xi$&$\eta$\\ \hline
0&0&0&$\xi$&$\eta$\\ 
1&0&0&$\xi$&$\eta$\\ 
$\xi$&$\xi$&$\xi$&$\{0,1\}$&$\eta$\\ 
$\eta$&$\eta$&$\eta$&$\eta$&$\{0,1,\xi\}$
\end{tabular}
\caption{The Majorana type fusion rule}
\label{tabfrulem}
\end{center}
\end{table}
For the case when $(\xi,\eta)=(\frac{1}{2},2)$ and the base field is of characteristic 5,  C.\ Franchi et al.\ determined such algebras in \cite{fms20}, \cite{fm21} and \cite{fms22}.
In these papers, the algebras we call dihedral 2-generated axial algebras of Majorana type are called 2-generated symmetric axial algebras of Monster type.
These results can be seen as a generalization of the result of \cite{s07}. 

In \cite{y21}, we classified dihedral 2-generated primitive axial algebra of Majorana type with $\xi=\eta$ by using the concept of axial decomposition algebras, which was considerd in \cite{dpsv19} by T.\ De Medts et al.

An $\mathcal{F}$-\textit{decomposition} of an algebra $M$ is a direct sum decomposition of $M$ satisfying the fusion rule $\mathcal{F}$.
For a fusion rule $\mathcal{F}=(S,\star)$ and a map $\lambda$ from $S$ to $\mathbb{F}$, an element $a$ of $M$ is said to be an $(\mathcal{F},\lambda)$-axis if there exists an $\mathcal{F}$-decomposition $(M_s)_{i\in S}$ such that $ax=\lambda_sx$ for all $x\in M_s$.
An $(\mathcal{F},\lambda)$-\textit{axial decomposition algebra} is an $\mathbb{F}$-algebra equippped with some $(\mathcal{F},\lambda)$-axes.

One of the purposes of this paper is to prove that there exists a universal object of the category of decomposition algebras with $n$ generators and $m$ decompositions for positive integers $n$ and $m$.
Here, an $\mathcal{F}$-\textit{decomposition algebra} is an $\mathbb{F}$-algebra equipped with some $\mathcal{F}$-decompositions and we say that an $\mathcal{F}$-decomposition algebra is generated by a subset $X$ as an $\mathcal{F}$-decomposition algebra if the algebra itself is the minimum subalgebra as an $\mathbb{F}$-algebra such that it is closed under the projections induced by the decompositions and includes $X$.

\begin{theorem}\sl\label{thunidecalg}
Let $\mathcal{F}=(S,\star)$ be a finite fusion rule and $n$ and $m$ positive integers.

Then the category of $n$-generated $\mathcal{F}$-decomposition algebras over $\mathbb{F}$ equipped with $m$ decompositions has a universal object.
\end{theorem}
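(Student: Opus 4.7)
The plan is to construct the universal object directly by generators and relations, adapting the construction of the universal Frobenius primitive $\mathcal{F}$-axial algebra from \cite{hrs13} to the setting without primitivity or a Frobenius form. First I would make the ambient category precise: objects are $\mathbb{F}$-algebras $A$ equipped with an ordered $n$-tuple $(g_1,\ldots,g_n)$ of elements and an ordered $m$-tuple $(D_1,\ldots,D_m)$ of $\mathcal{F}$-decompositions of $A$, subject to the constraint that $A$ is generated as an $\mathcal{F}$-decomposition algebra by $(g_1,\ldots,g_n)$; morphisms are $\mathbb{F}$-algebra homomorphisms carrying $g_i$ to $g_i'$ and the $s$-summand of $D_j$ into the $s$-summand of $D_j'$.

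To construct a candidate universal object $U$, I would define a set $W$ of formal expressions inductively: the symbols $x_1,\ldots,x_n$ lie in $W$; if $a,b\in W$, then the formal product $a\cdot b$ lies in $W$; and if $a\in W$ and $(j,s)\in\{1,\ldots,m\}\times S$, then the formal projection $\pi_j^s(a)$ lies in $W$. I would take the free $\mathbb{F}$-vector space $\mathbb{F}\langle W\rangle$, extend $\cdot$ bilinearly and each $\pi_j^s$ linearly, and quotient by the smallest ideal closed under $\cdot$ on both sides and under every $\pi_j^s$ containing, for all $w,w'\in W$ and all admissible indices, (iii) the completeness relations $\sum_{s\in S}\pi_j^s(w)-w$, (iv) the orthogonal idempotency relations $\pi_j^s(\pi_j^t(w))-\delta_{s,t}\pi_j^s(w)$, and (v) the fusion relations $\pi_j^u(\pi_j^s(w)\cdot\pi_j^t(w'))$ for $u\notin s\star t$. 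Call the resulting quotient $U$.

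I would then verify that $U$ is the required universal object. Each $\pi_j^s$ descends to a linear idempotent on $U$ by (iv); relations (iii) and (iv) together force $U=\bigoplus_{s\in S}\pi_j^s(U)$ to be a genuine direct sum, since applying $\pi_j^t$ to any relation $\sum_s u_s=0$ with $u_s\in\pi_j^s(U)$ yields $u_t=0$; and the fusion axiom on the $j$-th decomposition follows from (v) combined with the expansion $v\cdot v'=\sum_{s,t}\pi_j^s(v)\cdot\pi_j^t(v')$ provided by (iii). By construction the images of $x_1,\ldots,x_n$ generate $U$ as a decomposition algebra. For the universal property, any object $(A,(g_i'),(D_j'))$ in the category induces a map $W\to A$ by $x_i\mapsto g_i'$ and inductive interpretation of the two operations using the actual multiplication and projections of $A$; this extends linearly to $\mathbb{F}\langle W\rangle$, descends to $U$ because all three families of relations hold in any $\mathcal{F}$-decomposition algebra, and is the unique such morphism because the images of the $x_i$ generate $U$.

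The main obstacle I anticipate is bookkeeping rather than substance: writing (iii)--(v) so they capture every instance of the axioms (in particular for products of non-$\pi_j^s$-homogeneous elements), and checking that passing to the ideal does not collapse elements whose identification is not forced in every decomposition algebra. The first point is handled by linear propagation, since once (v) is imposed on formal projections it extends to arbitrary pairs via (iii) and bilinearity. The second is the observation that each of the relations (iii)--(v) is literally an axiom of the equational theory of $\mathcal{F}$-decomposition algebras with $m$ marked decompositions, so $U$ is precisely the free algebra on $n$ generators for that theory, whose existence is a standard fact of universal algebra.
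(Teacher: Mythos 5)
Your proposal is essentially the paper's own proof: the paper likewise builds a free commutative nonassociative magma enriched with formal projection operators (its auxiliary category $\mathcal{C}_{I,S\times J}$ and its universal object $\hat{M}_{n,m}$) and then quotients by the smallest ideal containing the completeness, idempotency, orthogonality and fusion relations that is additionally closed under the formal projections --- precisely your closure condition, which the paper realizes through the iterated ideals $R^{n,m}_k$ and their sum $R^{n,m}$. The only cosmetic difference is that the paper builds commutativity into the free magma from the start, so in your version you should either do the same or add the relations $w\cdot w'-w'\cdot w$; with that adjustment the two arguments coincide.
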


This theorem means that there exists an $\mathcal{F}$-decomposition algebra $M$ such that any $n$-generated $\mathcal{F}$-decomposition algebra equipped with $m$ $\mathcal{F}$-decompositions are isomorphic to a quotient of $M$.

By using the theorem, we can construct universal axial decomposition algebras and universal axial algebras.
\begin{corollary}\sl\label{thuniadecalg}
Let $\mathcal{F}=(S,\star)$ be a finite fusion rule, $\lambda$ a map from $S$ to $\mathbb{F}$ and $n$ a positive integer.

Then the category of $(\mathcal{F},\lambda)$-axial decomposition algebra over $\mathbb{F}$ generated by $n$ axes has a universal object.
\end{corollary}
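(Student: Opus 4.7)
The plan is to obtain the universal $(\mathcal{F},\lambda)$-axial decomposition algebra as a quotient of the universal decomposition algebra supplied by Theorem~\ref{thunidecalg}. First, apply Theorem~\ref{thunidecalg} with the same fusion rule $\mathcal{F}=(S,\star)$, the same $n$, and $m=n$, to obtain a universal $n$-generated $\mathcal{F}$-decomposition algebra $\tilde M$ equipped with $n$ decompositions. Write $a_1,\dots,a_n$ for its $n$ generators, and, for each $i$, write $(\tilde M^{(i)}_s)_{s\in S}$ for the $i$-th $\mathcal{F}$-decomposition of $\tilde M$. The strategy is to use the $i$-th decomposition as the eigenspace decomposition that witnesses $a_i$ being an axis.

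The next step is to force the axial condition. Consider the set of elements
$$R=\bigl\{\,a_i\cdot x-\lambda_s\,x\ \bigm|\ i\in\{1,\dots,n\},\ s\in S,\ x\in\tilde M^{(i)}_s\,\bigr\}\subset\tilde M,$$
and let $I$ be the smallest ideal of $\tilde M$ containing $R$ that is compatible with all $n$ decompositions (i.e.\ the smallest ``decomposition ideal'' containing $R$, so that $M:=\tilde M/I$ naturally inherits $n$ $\mathcal{F}$-decompositions $(M^{(i)}_s)_{s\in S}$ from the quotient map $\pi\colon\tilde M\twoheadrightarrow M$). By construction, in $M$ the image $\pi(a_i)$ acts as multiplication by $\lambda_s$ on $M^{(i)}_s$, so $\pi(a_i)$ is an $(\mathcal{F},\lambda)$-axis of $M$ and $M$ is an $(\mathcal{F},\lambda)$-axial decomposition algebra generated by $n$ axes.

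Universality is then formal. Given any $(\mathcal{F},\lambda)$-axial decomposition algebra $N$ generated by axes $b_1,\dots,b_n$, each $b_i$ comes equipped with an $\mathcal{F}$-decomposition $(N^{(i)}_s)_{s\in S}$ on which $b_i$ acts as $\lambda_s\cdot\mathrm{id}$; thus $N$ is in particular an $n$-generated $\mathcal{F}$-decomposition algebra with $n$ decompositions. By Theorem~\ref{thunidecalg} there is a (unique) surjective morphism of $\mathcal{F}$-decomposition algebras $\varphi\colon\tilde M\to N$ sending $a_i\mapsto b_i$ and $\tilde M^{(i)}_s$ into $N^{(i)}_s$. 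Because each $b_i$ is an $(\mathcal{F},\lambda)$-axis, $\varphi(R)=0$, and since $\ker\varphi$ is a decomposition ideal, it contains $I$. Hence $\varphi$ factors through a unique surjection $M\to N$, which is the required morphism.

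The main technical point that needs care is the existence, and the expected universal property, of the decomposition ideal $I$ generated by the subset $R$: one must know that the intersection of any family of decomposition ideals is again a decomposition ideal, and that quotienting $\tilde M$ by such an ideal yields an $\mathcal{F}$-decomposition algebra with the projections of the $(\tilde M^{(i)}_s)_{s\in S}$ serving as its $i$-th decomposition. This should be a routine consequence of the construction of $\tilde M$ in the proof of Theorem~\ref{thunidecalg} (where $\tilde M$ is itself built as the quotient of a free structure by the decomposition relations, and the same construction applies relative to the extra relations in $R$); once this is verified, the argument above goes through essentially formally.
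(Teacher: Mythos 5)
Your proposal is correct and follows essentially the paper's route: the paper also starts from the free/universal object underlying Theorem~\ref{thunidecalg} with $m=n$ and adjoins the axis relations $(\ad(a_i)-\lambda_s)x$ on the $s$-components, closing the resulting ideal under the formal projection operators (its ideal $R^{n,n}+\bar{R}^{n}$ of $\hat{M}_{n,n}$), which is precisely your ``smallest decomposition ideal containing $R$'' computed one quotient upstairs, with universality then checked formally just as you do. The points you defer --- that intersections of projection-invariant ideals are projection-invariant and that quotients by such ideals inherit the $\mathcal{F}$-decompositions --- are indeed routine and are exactly what the paper's lemma on ideals inducing objects of the axial decomposition category records.
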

\begin{corollary}\sl\label{corunialg}
Assume that $S$ is a finite subset of $\mathbb{F}$.

Then the category of $n$-generated $\mathcal{F}$-axial algebras has a universal object.
\end{corollary}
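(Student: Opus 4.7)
The plan is to realize $n$-generated $\mathcal{F}$-axial algebras as $(\mathcal{F},\lambda)$-axial decomposition algebras with $\lambda: S\hookrightarrow\mathbb{F}$ the inclusion, subject to the extra relations $a_i^2=a_i$, and then to obtain the universal object as a quotient of the universal axial decomposition algebra supplied by Corollary \ref{thuniadecalg}.

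The first step is to identify the relevant categories. When $\lambda$ is injective and a distinguished axis $a$ of an $(\mathcal{F},\lambda)$-axial decomposition algebra $A$ is additionally an idempotent, each component $A_s$ of the attached decomposition coincides with the $\lambda_s$-eigenspace of $a$, because $A_s$ is contained in that eigenspace and distinct eigenspaces are linearly independent. Hence $a$ is a semisimple idempotent whose eigenspace decomposition obeys $\mathcal{F}$, i.e.\ an $\mathcal{F}$-axis in the sense of an axial algebra. Since $S$ is a finite subset of $\mathbb{F}$, Lagrange interpolation expresses each projection onto $A_s$ as a polynomial in the left multiplication operator by $a$, so the ordinary subalgebra generated by the axes is automatically closed under all projections; the axial decomposition subalgebra they generate therefore coincides with the subalgebra they generate as an algebra. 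Conversely, any $n$-generated $\mathcal{F}$-axial algebra carries a canonical $(\mathcal{F},\lambda)$-axial decomposition structure via its eigenspace decompositions, and axis-preserving algebra homomorphisms automatically preserve these decompositions, so the two categories are identified via a fully faithful embedding.

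Next I would take $U$ to be the universal $n$-axis $(\mathcal{F},\lambda)$-axial decomposition algebra from Corollary \ref{thuniadecalg} with axes $a_1,\dots,a_n$, and define $M=U/I$ where $I$ is the intersection of the kernels of all morphisms $U\to V$ of $(\mathcal{F},\lambda)$-axial decomposition algebras in which every axis is an idempotent. By the first step, $M$ is an $n$-generated $\mathcal{F}$-axial algebra. To verify universality, let $A$ be any $n$-generated $\mathcal{F}$-axial algebra with generating axes $b_1,\dots,b_n$, viewed as an $(\mathcal{F},\lambda)$-axial decomposition algebra; the universal property of $U$ supplies a unique morphism $\phi: U\to A$ with $\phi(a_i)=b_i$, and because $b_i^2-b_i=0$ in $A$, the kernel of $\phi$ contains $I$, so $\phi$ descends uniquely to a morphism $M\to A$.

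The main obstacle I expect is showing that the quotient $M=U/I$ genuinely inherits an $(\mathcal{F},\lambda)$-axial decomposition structure, i.e.\ that $I$ decomposes as $\bigoplus_{s\in S}(I\cap U_{s,i})$ for every $i$. This is handled most cleanly by defining $M$ categorically as an inverse limit in the category of $(\mathcal{F},\lambda)$-axial decomposition algebras, so that the decompositions descend componentwise from the quotients $V$; one then checks, using again the Lagrange-polynomial form of the projections, that the resulting $M$ is still generated by the images of the $a_i$ and hence lies in the desired category.
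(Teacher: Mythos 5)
Your proposal is correct in substance, but it reaches the universal object by a different route than the paper. The first half of your argument (identifying $n$-generated $\mathcal{F}$-axial algebras with those $(\mathcal{F},\lambda)$-axial decomposition algebras, $\lambda$ the inclusion $S\hookrightarrow\mathbb{F}$, whose axes are idempotents, using Lagrange interpolation to see that the projections are polynomials in $\ad(a_i)$) is exactly the paper's Lemma \ref{lemcatiso}. Where you diverge is the construction: you define the universal axial algebra non-constructively, as $U/I$ with $I$ the intersection of the kernels of all morphisms from the universal axial decomposition algebra $U$ to algebras with idempotent axes, and then must argue that $U/I$ still lies in the category. The paper instead stays inside the free object $\hat{M}_{n,n}$ and exhibits the explicit minimal ideal $(x_ix_i-x_i\mid i\in I)+\bar{R}^n+R^{n,n}$; the crucial point of its Lemma \ref{lemcataxalg} is that once the decomposition and axis relations are imposed, closure of the ideal under the operators $\varphi_{s,i}^{n,n}$ is automatic, because modulo $\bar{R}^n+R^{n,n}$ each $\varphi_{s,i}^{n,n}$ coincides with the Lagrange polynomial in $\ad(x_i)$ and ideals are closed under $\ad(x_i)$. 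Your flagged obstacle (homogeneity of $I$ with respect to each decomposition) is closed most directly not by an inverse limit but by observing that any morphism of decomposition algebras intertwines the projections, so every kernel, and hence $I$, is invariant under all $\pr_s^i$ and therefore graded; one should also restrict to quotients of $U$ (a set) to avoid quantifying over a proper class, and then generation of $U/I$ by the images of the axes follows from the same Lagrange argument. The trade-off: your argument is shorter and more conceptual, while the paper's yields an explicit presentation of the universal algebra by generators of the minimal ideal, which is what it actually computes with in Section \ref{sec5}; both constructions produce the same object, since the ideal generated by the idempotency relations is contained in your $I$ and, by the paper's lemma, already induces an object of the category, forcing equality.
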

See Section \ref{sec3} for details.

As the other purpose, we classify 2-generated non-primitive axial algebras of Jordan type $\eta$.
By Corollary \ref{corunialg}, there exists a universal 2-generated non-primitive axial algebra of Jordan type $\eta$.
Thus it suffices to classify ideals of the universal algebra.

Let $\mathbb{F}$ be a field.
For $\Phi\subset\{0,1\}$  and $\eta\in\mathbb{F}\setminus\{0,1\}$, $\frulej{\Phi}{\eta}$ is a fusion rule as Table \ref{tabfruleieta}.
Note that in the primitive case, it is clear that $ax=0$ for each $0$-eigenvector $x$ of $a$ and thus $\Phi=\emptyset$.

\begin{table}[h]
\begin{center}
\begin{tabular}{c|ccc}
$\star$&0&1&$\eta$\\ \hline
0&0&$\Phi$&$\eta$\\ 
1&$\Phi$&1&$\eta$\\  
$\eta$&$\eta$&$\eta$&$\{0,1\}$
\end{tabular}
\caption{The fusion rule $\frulej{\Phi}{\eta}$}
\label{tabfruleieta}
\end{center}
\end{table}

\begin{theorem}\sl\label{thunialg}
Let $M_{\Phi}$ be the universal 2-generated $\frulej{\Phi}{\eta}$-axial algebra.
Then the following statements hold:
\begin{itemize}
\item[(1)]If $\Phi\neq\{0,1\}$ and $\eta\neq\frac{1}{2}$, then $M_{\Phi}$ is isomorphic to an algebra $\fournp{\eta}$.
\item[(2)]Assume that $\eta=\frac{1}{2}$.
If $\Phi=\emptyset$ or $\chf{F}\neq3$ nor 5, then $M_{\Phi}$ is isomorphic to an algebra $\infalgnp$.
\end{itemize}
\end{theorem}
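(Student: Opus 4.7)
The plan is to follow the dihedral-expansion strategy used in \cite{hrs15} for the primitive case, adapted to accommodate the additional products allowed by $\Phi$. Let $a_0,a_1$ generate $M_{\Phi}$. Since $\frulej{\Phi}{\eta}$ is $\mathbb{Z}/2\mathbb{Z}$-graded by the parity of the number of $\eta$-factors (independently of $\Phi$), each axis $a$ defines a Miyamoto-type involution $\sigma_a$ that fixes $M_0^{a}\oplus M_1^{a}$ pointwise and negates $M_\eta^{a}$. Setting $a_{i+1}:=\sigma_{a_i}(a_{i-1})$ for $i\in\mathbb{Z}$ produces a dihedral sequence of axes, and by Theorem~\ref{thunidecalg} together with Corollary~\ref{corunialg} the universal algebra $M_{\Phi}$ is spanned by the $a_i$ together with certain ``non-primitive'' vectors lying in their $\{0,1\}$-eigenspaces.

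The next step is to derive linear relations among the $a_i$. Decompose $a_n$ with respect to the $a_0$-eigenspaces as $a_n=\alpha_n a_0+u_n^{(0)}+u_n^{(1)}+u_n^{(\eta)}$; applying $\sigma_{a_0}$ shows $u_n^{(\eta)}=\tfrac12(a_n-a_{-n})$ and yields
\[
a_0a_n=\alpha_n a_0+u_n^{(1)}+\tfrac{\eta}{2}(a_n-a_{-n}).
\]
Imposing the $\eta\star\eta\subset\{0,1\}$ part of the fusion rule on products $u_n^{(\eta)}\cdot u_m^{(\eta)}$ produces polynomial identities in $\eta$ among the coefficients $\alpha_n$ and the extra components $u_n^{(0)},u_n^{(1)}$. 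The new subtlety compared with the primitive case is that these extra components are not proportional to $a_0$; they have to be controlled by combining the Miyamoto automorphisms $\sigma_{a_0},\sigma_{a_1}$ with the swap symmetry $a_0\leftrightarrow a_1$.

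The crucial identity takes the form $(2\eta-1)\cdot Q_n=0$ for a specific combination $Q_n$ of the sequence and the non-primitive vectors. When $\eta\neq\tfrac12$ this forces $Q_n=0$ and yields a short-period closure $a_{i+N}=a_i$; one then checks directly that the resulting finite-dimensional algebra satisfies the defining relations of $\fournp{\eta}$, and, conversely, that $\fournp{\eta}$ is itself a $2$-generated $\frulej{\Phi}{\eta}$-axial algebra, so that the canonical surjection $M_{\Phi}\twoheadrightarrow\fournp{\eta}$ is an isomorphism, proving~(1). When $\eta=\tfrac12$ the relation above is vacuous and the sequence $(a_i)_{i\in\mathbb{Z}}$ does not close; the hypothesis $\chf{F}\neq 3,5$ is used to prevent accidental degeneracies in the remaining recurrences and to identify the resulting algebra with $\infalgnp$, proving~(2).

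In my view, the main obstacle is the parallel bookkeeping of the primitive sequence $(a_i)$ and the extra non-primitive components forced by $\Phi\neq\emptyset$: because the Miyamoto involutions act trivially on the $\{0,1\}$-eigenspaces, these extra vectors enjoy no dihedral symmetry and must be pinned down by hand from the fusion-rule identities. Producing the candidate algebras $\fournp{\eta}$ and $\infalgnp$ as explicit $\frulej{\Phi}{\eta}$-axial algebras is equally essential, since this is what matches the upper bound for $\dim M_{\Phi}$ coming from the candidate with the lower bound coming from the dihedral recursion.
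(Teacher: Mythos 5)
Your plan follows essentially the same route as the paper: generate the dihedral orbit of axes $a_i$ via the Miyamoto involutions, decompose $a_n$ against the $a_0$-eigenspaces (with $\eta$-part $\tfrac12(a_n-a_{-n})$), extract identities from the fusion rule whose decisive coefficient is $2\eta-1$, and then match the resulting spanning bound against the explicit candidates $\fournp{\eta}$ (for $\eta\neq\tfrac12$) and $\infalgnp$ (for $\eta=\tfrac12$, using $\chf{F}\neq3,5$ or $\Phi=\emptyset$). Two small corrections that do not change the verdict: the paper's crucial consequence for $\eta\neq\tfrac12$ is the linear dependence $a_2-a_{-2}+a_1-a_{-1}=0$ (the Miyamoto enclosure keeps six axes spanning only a four-dimensional space, and the dimension bound $\leq 6$ still requires closing the multiplication on the extra elements $p_1$ and $q$ or $r$) rather than a directly derived periodicity $a_{i+N}=a_i$, and in your final paragraph the bounds are reversed --- the surjection onto the candidate gives the lower bound on $\dim M_{\Phi}$, while the dihedral/fusion computation supplies the upper bound (the existence results of Section \ref{sec3} give only the universal object, not the spanning statement).
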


\begin{theorem}\sl\label{propcalgj}
A 2-generated $\mathcal{F}_{\Phi}(\eta)$-axial algebra with $\Phi\neq\{0,1\}$ and $\eta\neq\frac{1}{2}$ nor $-1$ is isomorphic to one of the 10 algebras listed in Table \ref{tabfialg1} of Section \ref{sec6}.

A 2-generated $\mathcal{F}_{\Phi}(-1)$-axial algebra with $\Phi\neq\{0,1\}$ is isomorphic to one of the 16 algebras listed in Table \ref{tabfialg1} and Table \ref{tabfialg2} of Section \ref{sec6}.
\end{theorem}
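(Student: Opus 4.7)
The strategy is to reduce the classification to an ideal analysis inside the universal algebra supplied by Theorem \ref{thunialg}. By Corollary \ref{corunialg} and Theorem \ref{thunialg}, any 2-generated $\frulej{\Phi}{\eta}$-axial algebra is a quotient $M_\Phi/I$, and the hypothesis $\eta \neq \tfrac12$ identifies $M_\Phi$ with the explicit algebra $\fournp{\eta}$. The task therefore becomes: enumerate those ideals $I \subseteq \fournp{\eta}$ for which the quotient is again a $\frulej{\Phi}{\eta}$-axial algebra, equivalently, ideals that are direct sums of their intersections with the eigenspaces of the two generating axes, so that the images of those axes remain axes of the quotient with the prescribed fusion rule.

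I would begin by fixing a basis of $\fournp{\eta}$ adapted to the two eigenspace decompositions of the generating axes $a_0, a_1$, and recording the multiplication table on this basis together with the projectors $\pi_0, \pi_1, \pi_\eta$ associated to each axis. From this explicit data one reads off the standard primitive axial quotients of Jordan type --- namely $\onea$, $\mathrm{2B}$ and $\threec{\eta}$ --- together with the non-primitive quotients obtained by imposing partial collapsing relations among the basis vectors. The enumeration then proceeds by casework on a minimal relation $r \in I$: for each candidate $r$ one computes the ideal it generates, closes it under the projectors, verifies properness and compatibility with the fusion rule, and identifies the resulting quotient up to isomorphism using invariants such as dimension, the number of distinct axes, and the orbit structure under the Miyamoto involutions. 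Under the generic assumption $\eta \neq \tfrac12, -1$ this produces exactly the ten algebras of Table \ref{tabfialg1}.

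For the second assertion one specialises to $\eta = -1$ and repeats the analysis inside $\fournp{-1}$. At this value several structure constants of $\fournp{\eta}$ coincide or vanish --- reflecting an enhanced symmetry of the universal algebra at $\eta = -1$ --- and, as a consequence, certain subspaces that were not projection-invariant ideals for generic $\eta$ become so. Carrying out the casework in $\fournp{-1}$ then yields six additional axial quotients, listed in Table \ref{tabfialg2}, for the claimed total of sixteen. The main obstacle throughout is combinatorial: one must verify that the ideal enumeration is exhaustive by systematically processing every minimal relation and carefully closing it under the projectors (which can introduce further unexpected relations), and at $\eta = -1$ one must in addition check that the extra coincidences produce no further quotients beyond the listed six. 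A secondary difficulty is establishing pairwise non-isomorphism among the quotients, for which dimension alone is usually insufficient and one must invoke finer invariants such as the Frobenius form when it exists, or an explicit comparison of multiplication tables.
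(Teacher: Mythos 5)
Your top-level reduction is the same as the paper's: by Theorem \ref{thunialg} the universal 2-generated $\frulej{\Phi}{\eta}$-axial algebra with $\Phi\neq\{0,1\}$, $\eta\neq\frac12$ is $\fournp{\eta}$, so the theorem amounts to classifying the quotients, i.e.\ the ideals, of $\fournp{\eta}$ (and of $\fournp{-1}$). But two things in your plan need repair. First, your ``equivalently'' clause is off: every quotient of an axial algebra inherits the fusion rule (the spectral projectors of an axis are polynomials in $\ad(a)$, so every ideal is invariant under them and the eigenspace decomposition descends), so one must classify \emph{all} ideals rather than select those whose quotient ``remains axial''; for the same reason, ``closing a candidate relation under the projectors'' of a single axis imposes no constraint at all. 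Second, and this is the genuine gap, the step that does all the work --- making the enumeration finite and provably exhaustive --- is left as a placeholder (``systematic casework on minimal relations''). The paper's mechanism is a joint-eigenspace lemma: if an ideal $R$ does not contain $\ha_1-\ha_{-1}$, then $R$ meets the one-dimensional $\eta$-eigenspaces of the two generating axes trivially, hence lies in $(\eisp{\ha_0+s_0}{0}\oplus\eisp{\ha_0+s_0}{1})\cap(\eisp{\ha_1+s_1}{0}\oplus\eisp{\ha_1+s_1}{1})$, and there the Seress property of $\fournp{\eta}$ (the product of the $0$- and $1$-eigenspaces of an axis is zero, so $0$- and $1$-eigenvectors associate with everything) makes the two axis actions commute on $R$, giving $R=\bigoplus_{\iota_0,\iota_1\in\{0,1\}}(\eisp{\ha_0+s_0}{\iota_0}\cap\eisp{\ha_1+s_1}{\iota_1}\cap R)$. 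Since these joint eigenspaces are computed explicitly to be $\mathbb{F}s_0$, $\mathbb{F}s_1$, $\mathbb{F}r$ and, only when $\eta=-1$, $\mathbb{F}\hat{q}$, the ideals are exactly the spans of subsets of these elements. This also identifies precisely where your ``extra coincidences at $\eta=-1$'' come from: $\hat{q}+(\eta+1)(s_0+s_1)$ acts as $(\eta+1)$ times an identity, so at $\eta=-1$ the element $\hat{q}$ annihilates the algebra, the joint $(0,0)$-eigenspace jumps from $0$ to $\mathbb{F}\hat{q}$, and one gets exactly the six extra quotients of Table \ref{tabfialg2}.

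You also omit the complementary branch, which the joint-decomposition argument does not cover: ideals containing $\ha_1-\ha_{-1}$ (so that Miyamoto-conjugate axes collapse). The paper disposes of it by noting that $\fournp{\eta}/(\ha_1-\ha_{-1})\cong\htwoa$ and invoking Proposition \ref{qtwoa}, whose proof uses the associativity of $\htwoa$ to show its only quotients are $\htwoa$, $\honea$, $\twoa$ and $\onea$; these account for the associative-type rows of Table \ref{tabfialg1}. Without the joint-eigenspace lemma and this separate branch your exhaustiveness claim is unsupported; with them, the pairwise non-isomorphism issue you raise at the end is settled cheaply by the invariants recorded in the tables (dimension, axial dimension, eigenspace dimensions), with no need for Frobenius forms.
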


See Section \ref{sec4} and \ref{sec6} for details.

Since an axial algebra is a pair of an algebra and its generating axes, there may exist two axial algebras such that they satisfy distinguished Fusion rules but they are isomorphic as an $\mathbb{F}$-algebra.
Then a question exists:
For an $\mathcal{F}$-axial algebra, does there exist an axis satisfying fusion rule other than $\mathcal{F}$?

To give an answer of that question, consider unital axial algebras.
In the 2-generated case, $\1-a$ is a primitive axis of Jordan type $1-\eta$ if $a$ is an axis of Jordan type $\eta$.
However, in $n$-generated cases with $n\geq3$, $\1-a$ is a non-primitive axis in general.
So we should consider non-primitive axes to generalize 2-generated cases to $n$-generated cases.

\vspace{\baselineskip}
In Section \ref{sec2}, we recall the definitions of axial algebras, their fundamental properties and relevant terminologies on general axial algebras.
In Section \ref{sec3}, we prove Theorem 1 and the corollaries.
In Section \ref{sec4'}, we show some properties of $\mathcal{F}_I$-axial algebras.
In Section \ref{sec4}, we list 2-generated non-primitive $\frulej{\Phi}{\eta}$-axial algebras.
In Section \ref{secasalg}, we consider the case which we call associative type, that is, the case when $\eta$-eigenspace is $0$. 
In Section \ref{sec5}, we determine the universal 2-generated $\frulej{\Phi}{\eta}$-axial algebra to show Theorem 2.
In Section \ref{sec6}, we prove Theorem 3.

\section{Preliminaries}\label{sec2}
Throughout the paper, let $\mathbb{F}$ be a field with $\chf{F}\neq2$ and $M$ a commutative nonassociative algebra over $\mathbb{F}$.

Let $\mathcal{F}=(S,\star)$ be a fusion rule.
We say that $\mathcal{F}$ is finite if $S$ is finite.

\subsection{Decomposition algebras}
Let $M$ be a commutative nonassociative algebra over $\mathbb{F}$.

An \textit{$\mathcal{F}$-decomposition} of $M$ is a direct sum decomposition $(M_s)_{s\in S}$ of $M$ such that $M_{s}M_{t}\subset\bigoplus_{i\in s\star t}M_i$ for all $s,t\in S$.

An \textit{$\mathcal{F}$-decomposition algebra} is a triple $(M,I,\Omega)$ where $M$ is a commutative nonassociative algebra over $\mathbb{F}$, $I$ is an index set and $\Omega$ is an $S\times I$-tuple $(M_s^i)_{s\in S,i\in I}$ of subspaces of $M$ such that $(M_s^i)_{s\in S}$ is an $\mathcal{F}$-decomposition of $M$ for all $i\in I$.
We say that $M$ is an $\mathcal{F}$-decomposition algebra if there is no use to say what $I$ and $\Omega$ are.

\begin{definition}
Let $(M,I,(M_s^i)_{s\in S,i\in I})$ be an $\mathcal{F}$-decomposition algebra.
We say that a subset $A$ of $M$ generates a subalgebra $N$ of $M$ as an $\mathcal{F}$-decomposition algebra if $N$ is the minimum subalgebra of $M$ containing $A$ such that the triple $(N,I,(N\cap M_i^s)_{i\in I,s\in s})$ is an $\mathcal{F}$-decomposition algebra.  
\end{definition}

$\catdec{\mathcal{F}}$ is the category having as objects $\mathcal{F}$-decomposition algebras.
For two objects $(M,I,(M_s^i)_{i\in I,s\in S})$ and $(N,J,(N_s^j)_{j\in J,s\in S})$, a morphism from $(M,I,(M_s^i)_{i\in I,s\in S})$ to $(N,J,(N_s^j)_{j\in J,s\in S})$ is a pair $(\psi,\varphi)$ of a map $\psi$ from $I$ to $J$ and an $\mathbb{F}$-algebra  homomorphism $\varphi$ from $M$ to $N$ such that 
$$\varphi(M_s^i)\subset N_s^{\psi(i)}$$
for all $i\in I$ and $s\in S$.

\subsection{Axial decomposition algebras and the Miyamoto automorphisms}
Let $\lambda$ be a map from $S$ to $\mathbb{F}$ which sends $s$ to $\lambda_s$, $a$ an element of $M$ and $\Phi=(M_s)_{s\in S}$ an $\mathcal{F}$-decomposition of $M$.
Then the pair $(a,\Phi)$ is called an \textit{$(\mathcal{F},\lambda)$-axis} if 
$$ax=\lambda_sx$$
for all $s\in S$ and $x\in M_s$.
We say that $a$ is an axis if there is no use to say what $\Phi$ is.

An $(\mathcal{F},\lambda)$-axial decomposition algebra is a quadraple $(M,I,\Omega,\mathcal{A})$ of a commutative nonassociative algebra $M$, an index set $I$, an $S\times I$-tuple $\Omega=(M_s^i)_{s\in S,i\in I}$ of subspaces of $M$ and an $I$-tuple $\mathcal{A}=(a_i)_{i\in I}$ of elements of $M$ such that $(a_i,(M_s^i)_{s\in S})$ is an $(\mathcal{F},\lambda)$-axis for all $i\in I$.

$\cataxdec{\mathcal{F}}{\lambda}$ is the category having as objects $(\mathcal{F},\lambda)$-axial decomposition algebras.
For two objects $(M,I,(M_s^i)_{i\in I,s\in S},(a_i)_{i\in I})$ and $(N,J,(N_s^j)_{j\in J,s\in S},(b_j)_{j\in J})$, a morphism from $(M,I,(M_s^i)_{i\in I,s\in S},(a_i)_{i\in I})$ to $(N,J,(N_s^j)_{j\in J,s\in S},(b_j)_{j\in J})$ is a pair $(\psi,\varphi)$ of a map $\psi$ from $I$ to $J$ and an $\mathbb{F}$-algebra homomorphism $\varphi$ from $M$ to $N$ such that 
$$\varphi(M_s^i)\subset N_s^{\psi(i)}$$
and
$$\varphi(a_i)=b_{\psi(i)}$$
for all $i\in I$ and $s\in S$.

Let $G$ be a group and $\phi$ a map from $S$ to $G$ such that
$$\phi(s\star t)=\{\phi(s)\phi(t)\}$$
for all $s,t\in S$.
Put $g_s=\phi(s)$.

Let $\chrg{\mathbb{F}}{G}$ be the $\mathbb{F}$-character group of $G$.

For $\chi\in\chrg{\mathbb{F}}{G}$ and an $(\mathcal{F},\lambda)$-axes $(a,\Phi)$,
There exists an $\mathbb{F}$-algebra automorphism $\tau_{\chi,(a,\Phi)}$ such that
$$\tau_{\chi,(a,\Phi)}(x)=\chi(g_s)x$$
for all $s\in S$ and $x\in M_s$, where $\Phi=(M_s)_{s\in S}$.
We call such an automorphism an \textit{Miyamoto automorphism} of $\chi$ and $(a,\Phi)$.

\subsection{Axial algebras}
In this subsection, assume that $S\in\mathbb{F}$ and $\lambda_s=s$.

For an $(\mathcal{F},\lambda)$-axes $(a,\Phi)$, $\Phi=(\eisp{a}{s})_{s\in S}$.
Thus the axis is determined by $a$ only.
So we simply say that $a$ is an \textit{$\mathcal{F}$-axis}.

In this paper, we assume that an $\mathcal{F}$-axis is an idempotent.

An \textit{$\mathcal{F}$-axial algebra} is a pair $(M,\mathcal{A})$ of a commutative nonassociative algebra $M$ and a set $\mathcal{A}$ of $\mathcal{F}$-axes of $M$ such that $\mathcal{A}$ generates $M$ as an $\mathbb{F}$-algebra.
In the case when there is no danger of confusion, we simply say that $M$ is an $\mathcal{F}$-axial algebra.

For axial algebras, we can also define Miyamoto automorphisms by the same way as the case of axial decomposition algebras. 

$\catax{\mathcal{F}}$ is the category having as objects $\mathcal{F}$-axial algebras.
For two objects $(M,\mathcal{A})$ and $(N,\mathcal{B})$, a morphism from $(M,\mathcal{A})$ to $(N,\mathcal{B})$ is an $\mathbb{F}$-algebra homomorphism $\varphi$ from $M$ to $N$ such that
$$\varphi(\mathcal{A})\subset\mathcal{B}.$$

\section{Existence of universal algebras}\label{sec3}
The purpose of this section is to prove Theorem 1 , Corollaries \ref{thuniadecalg} and \ref{corunialg}.

Let $\mathcal{F}=(S,\star)$ be a finite fusion rule, $\lambda$ a map from $S$ to $\mathbb{F}$.
Set $\lambda_s=\lambda(s)$ for $s\in S$.
Let $n$ and $m$ be elemants of $\mathbb{Z}_{>0}$, $I=\{1,\ldots,n\}$ and $J=\{1,\ldots,m\}$.

\subsection{Category of $n$-generated decomposition algebras equipped with $m$ decompositions}
For two sets $A$ and $B$, $\mathcal{C}_{A,B}$ is the category having as objects triples $(M,(f_{b})_{b\in B},(x_a)_{a\in A})$ of a commutative nonassociative algebra $M$ over $\mathbb{F}$, a $B$-tuple $(f_{b})_{b\in B}$ of $\mathbb{F}$-space  endomorphisms of $M$ and a $A$-tuple $(x_a)_{a\in A}$ of elements of $M$ such that $M$ is the minimum subalgebra of $M$ which is invariant under $f_b$ for all $b\in B$ and includes $x_a$ for all $a\in A$.
A morphism from $(M,(f_b)_{b\in B},(x_a)_{a\in A})$ to $(N,(g_b)_{b\in B},(y_a)_{a\in A})$ is a $\mathbb{F}$-space homomorphism $\phi$ from $M$ to $N$ such that $\phi(x_a)=y_a$ for all $a\in A$ and $\phi\circ f_b=g_b\circ\phi$ for all $b\in B$.

The category $\catdecij{n}{m}{\mathcal{F}}$ is a full subcagtegory of $\mathcal{C}_{I,S\times J}$ having as its objects the triples $(M,(f_s^j)_{s\in S,j\in J},(a_i)_{i\in I})$ such that $(M,J,(f_s^j(M))_{s\in S,j\in J})$ is an object of $\catdec{\mathcal{F}}$.

We call the category $\catdecij{n}{m}{\mathcal{F}}$ the category of $\mathcal{F}$-decomposition algebras with $n$ generators and $m$  decompositions.

Let $(M,J,(M_s^j)_{s\in S,j\in J})$ be an $\mathcal{F}$-decomposition algebra generated by $(a_i)_{i\in I}$.
For $s\in S$ and $j\in J$, let $\pr_s^j$ be the projection from $M$ to $M_s^j$ induced by the direct sum decomposition $(M_s^j)_{s\in S}$. 
Then the triple $(M,(\pr_s^j)_{s\in S,j\in J},(a_i)_{i\in I})$ is an object of $\catdecij{n}{m}{\mathcal{F}}$. 

\subsection{Category of axial decomposition algebras generated by $n$ axes}
The category $\cataxdeci{n}{\mathcal{F}}{\lambda}$ is a full subcategory of $\mathcal{C}_{I,S\times I}$ having as objects triples $(M,(f_s^i)_{s\in S,i\in I}, (a_i)_{i\in I})$ such that $(M,I,(f_s^i(M))_{s\in S,i\in I}, (a_i)_{i\in I})$ is an $(\mathcal{F},\lambda)$-axial decomposition algebra.

We call the category $\cataxdeci{n}{\mathcal{F}}{\lambda}$ the category of $(\mathcal{F},\lambda)$-axial decomposition algebras with $n$ generating axes.

Let $(M,I,(M_s^i)_{s\in S,i\in I},(a_i)_{i\in I})$ be an $(\mathcal{F},\lambda)$-axial decomposition algebra such that $M$ is generated by $(a_i)_{i\in I}$ as an $\mathcal{F}$-decomposition algebra.
For $s\in S$ and $i\in I$, let $\pr_s^i$ be the projection from $M$ to $M_s^i$ induced by the direct sum decomposition $(M_s^i)_{s\in S}$. 
Then the triple $(M,(\pr_s^i)_{s\in S,i\in I},(a_i)_{i\in I})$ is an object of $\cataxdeci{n}{\mathcal{F}}{\lambda}$.

By the definition of axial decomposition algebras, $\cataxdeci{n}{\mathcal{F}}{\lambda}$ is a full subcategory of $\catdecij{n}{n}{\mathcal{F}}$.

\subsection{Category of $n$-generated axial algebras}
In this subsection, assume that $S$ is a finite subset of $\mathbb{F}$.

$\cataxi{n}{\mathcal{F}}$ is the category  having as objects pairs $(M,(a_i)_{i\in I})$ of a commutative nonassociative algebra $M$ over $\mathbb{F}$ and an $n$-tuple $(a_i)_{i\in I}$ of elements of $M$ such that $(M,\{a_i\}_{i\in I})$ is an $\mathcal{F}$-axial algebra.
A morphism from $(M,(a_i)_{i\in I})$ to $(N,(b_i)_{i\in I})$ is an $\mathbb{F}$-algebra homomorphism $\phi$ from $M$ to $N$ such that $\phi(a_i)=b_i$.

\begin{lemma}\sl\label{lemcatiso}
Assume that $S\subset\mathbb{F}$ and let $\lambda$ be the inclusion map from $S$ to $\mathbb{F}$.
Then the category $\cataxi{n}{\mathcal{F}}$ is isomorphic to a subcategory of $\cataxdeci{n}{\mathcal{F}}{\lambda}$ having as objects triples $(M,(\pr_s^i)_{s\in S,i\in I},(a_i)_{i\in I})$ such that $a_i$ is an idempotent for all $i\in I$.
\end{lemma}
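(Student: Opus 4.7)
The plan is to define a functor $F\colon\cataxi{n}{\mathcal{F}}\to\cataxdeci{n}{\mathcal{F}}{\lambda}$ landing in the prescribed subcategory, together with an inverse $G$; both functors will be the identity on underlying algebras and on morphisms. On an object $(M,(a_i)_{i\in I})$ of $\cataxi{n}{\mathcal{F}}$, each $a_i$ is an idempotent whose action on $M$ is semisimple with eigenspace decomposition $M=\bigoplus_{s\in S}\eisp{a_i}{s}$ obeying the fusion rule $\mathcal{F}$. Let $\pr_s^i$ denote the projection onto $\eisp{a_i}{s}$ and set $F(M,(a_i))=(M,(\pr_s^i)_{s,i},(a_i))$. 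Since $\lambda$ is the inclusion $S\hookrightarrow\mathbb{F}$, this is an $(\mathcal{F},\lambda)$-axial decomposition algebra whose axes are idempotent, so it lies in the specified subcategory. On a morphism $\varphi\colon(M,(a_i))\to(N,(b_i))$, set $F(\varphi)=\varphi$: because $\varphi$ is an $\mathbb{F}$-algebra homomorphism with $\varphi(a_i)=b_i$, any $x\in\eisp{a_i}{s}$ satisfies $b_i\varphi(x)=\varphi(a_ix)=s\varphi(x)$, hence $\varphi(\eisp{a_i}{s})\subset\eisp{b_i}{s}$ and $\varphi$ intertwines the projections.

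For the inverse, set $G(M,(\pr_s^i)_{s,i},(a_i))=(M,(a_i))$ and $G(\varphi)=\varphi$. Two points need verification: that each $a_i$ is an $\mathcal{F}$-axis of $M$, and that $\{a_i\}_{i\in I}$ generates $M$ as an $\mathbb{F}$-algebra, not merely as an $\mathcal{F}$-decomposition algebra. For the first, the given direct sum $M=\bigoplus_{s\in S}\pr_s^i(M)$ together with the axial identity $a_ix=sx$ on $\pr_s^i(M)$ exhibits $\pr_s^i(M)$ as the $s$-eigenspace of $a_i$, using the distinctness of the scalars $s\in S\subset\mathbb{F}$; semisimplicity of $a_i$ and the fusion rule are then immediate. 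For the second, the central observation is that by Lagrange interpolation over the finite set $S\subset\mathbb{F}$, the projection $\pr_s^i$ equals, as an operator on $M$, a polynomial in left multiplication by $a_i$. Consequently any $\mathbb{F}$-subalgebra of $M$ containing $a_i$ is automatically stable under $\pr_s^i$ for every $s$, and applying this for every $i\in I$ shows that the minimum $\mathbb{F}$-subalgebra containing $\{a_i\}_{i\in I}$ coincides with the minimum subspace containing $\{a_i\}$ closed under products and all projections, which by hypothesis is $M$.

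Finally, $GF=\mathrm{id}$ is immediate since $F$ adjoins the canonical eigenspace projections that $G$ forgets. For $FG=\mathrm{id}$, the same distinctness-of-eigenvalues argument used above forces the decomposition data $\pr_s^i(M)$ in any object of the subcategory to coincide with the eigenspaces $\eisp{a_i}{s}$, so $F$ reconstructs exactly the data it was fed. The only nontrivial ingredient in the whole argument is the Lagrange-interpolation step reconciling the two notions of generation, and this is the main (mild) obstacle; the rest is routine bookkeeping.
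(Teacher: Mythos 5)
Your proposal is correct and follows essentially the same route as the paper: the one substantive point in both arguments is that, since $S$ consists of distinct scalars in $\mathbb{F}$, each projection $\pr_s^i$ is a polynomial in $\ad(a_i)$ (the paper writes $\pr_s^a=(F_{S\setminus\{s\}}(s))^{-1}F_{S\setminus\{s\}}(\ad(a))$), so generation as an $\mathcal{F}$-decomposition algebra and generation as an $\mathbb{F}$-algebra coincide. Your additional bookkeeping about the functors $F$, $G$ and the identification of the decomposition subspaces with eigenspaces is the routine part the paper leaves implicit.
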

\begin{proof}
It suffices to verify that if $M$ is generated by a set $\mathcal{A}$ of $\mathcal{F}$-axes as $\mathcal{F}$-decomposition algebra, then $M$ is generated by $\mathcal{A}$ as $\mathbb{F}$-algebra. 
Let
$$F_{A}(T)=\prod_{\alpha\in A}(T-\alpha)\in\mathbb{F}[T]$$
for $A\subset\mathbb{F}$ such that $|A|<\infty$.
Recall that $\pr_s^a$ is the projection from $M$ to $\eisp{a}{s}$ for $a\in\mathcal{A}$.
Then
$$\pr_s^a=(F_{S\setminus\{s\}}(s))^{-1}F_{S\setminus\{s\}}(\ad(a))$$
for  all $s\in S$ and $a\in\mathcal{A}$.
Thus the subalgebra of $M$ generated by $\mathcal{A}$ is invariant under $\pr_s^a$ for all $s\in S$ and $a\in\mathcal{A}$.  Hence it is generated by $\mathcal{A}$ as $\mathcal{F}$-decomposition algebra.
So $M$ is generated by $\mathcal{A}$ as $\mathbb{F}$-algebra.
\end{proof}

\subsection{Proof of Theorem 1}
\begin{lemma}\sl
Let $A$ and $B$ be sets.
Then the category $\mathcal{C}_{A,B}$ has a universal object.
\end{lemma}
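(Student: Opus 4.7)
The plan is to construct the universal object explicitly as a free object in the signature consisting of one commutative binary operation (the algebra multiplication) together with a unary operation for each $b \in B$. I would first define a set $T$ of formal terms by induction: the symbols $x_a$ for $a \in A$ are basic terms; given $t_1, t_2 \in T$, the unordered pair $\{t_1, t_2\}$ is a term; and given $t \in T$ and $b \in B$, the symbol $f_b(t)$ is a term. One checks that $T$ is well-defined and has unique readability across the three constructors, using unordered pairs to build commutativity of the multiplication directly into the syntax.

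Next, I would let $U$ be the free $\mathbb{F}$-vector space on $T$, define a commutative bilinear product on $U$ by $t_1 \cdot t_2 := \{t_1, t_2\}$ on basis elements, and define $\mathbb{F}$-linear endomorphisms of $U$ by the formal assignment $t \mapsto f_b(t)$ on basis elements, extended linearly. By induction on term complexity, every element of $T$ lies in the minimal subalgebra of $U$ that contains each $x_a$ and is stable under each $f_b$; since $T$ spans $U$, this subalgebra is all of $U$, so the triple $(U, (f_b)_{b \in B}, (x_a)_{a \in A})$ is an object of $\mathcal{C}_{A,B}$.

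For the universal property, given any other object $(N, (g_b)_{b \in B}, (y_a)_{a \in A})$ of $\mathcal{C}_{A,B}$, I would define a morphism $\phi : U \to N$ recursively on $T$ by $\phi(x_a) = y_a$, $\phi(\{t_1, t_2\}) = \phi(t_1)\,\phi(t_2)$, and $\phi(f_b(t)) = g_b(\phi(t))$, then extend $\mathbb{F}$-linearly. By construction $\phi$ respects the product on basis elements (hence on all of $U$ by bilinearity), intertwines $f_b$ with $g_b$, and carries $x_a$ to $y_a$. Uniqueness is forced, since any morphism must agree with $\phi$ on every basis term by induction on its formation.

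The main obstacle is essentially bookkeeping: setting up the inductive definition of $T$ so that commutativity is cleanly encoded via unordered pairs, and confirming unique readability so that the formal multiplication and the operators $f_b$ are unambiguously defined on the basis. Once that is in place, the verifications that $U$ lies in $\mathcal{C}_{A,B}$ and that the induced $\phi$ is the unique morphism are standard instances of the universal property satisfied by free algebras in a multi-sorted signature.
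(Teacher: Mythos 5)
Your proposal is correct and is essentially the paper's own construction: the paper builds the same free object as the magma algebra $\mathbb{F}\hat{X}_{A,B}$ on the set of formal terms generated from the $x_a$ by a commutative binary product and unary symbols indexed by $B$ (presented there as an increasing union of free commutative magmas $\hat{X}_{A,B}^k$ with $\varphi_b\colon x\mapsto(b,x)$), and the universal property is verified by the same recursion on term formation that you sketch.
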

\begin{proof}
For $k\in\mathbb{Z}_{>0}$, let $\hat{X}_{A,B}^k$ be a free nonassociative commutative magma satisfying the following statements:
\begin{itemize}
\item$\hat{X}_{A,B}^1$ is generated by $\{x_a\}_{a\in A}$.
\item For $k\in\mathbb{Z}_{>0}$, $\hat{X}_{A,B}^{k+1}$ is generated by $(B\times\hat{X}_{A.B}^{k})\cup\hat{X}_{A,B}^k$.
\end{itemize}
Let
$$\hat{X}_{A,B}=\bigcup_{k\in\mathbb{Z}_{>0}}\hat{X}_{A,B}^k.$$
Then $\mathbb{F}\hat{X}_{A,B}$ is an $\mathbb{F}$-algebra.

For $b\in B$ and $k\in\mathbb{Z}_{>0}$, set $\varphi_b^k$ be a map from $\hat{X}_{A,B}^k$ to $\hat{X}_{A,B}^{k+1}$ such that $\varphi_b^kx=(b,x)$.
Then the tuple $(\varphi_b^k)_{k\in\mathbb{Z}_{>0}}$ induces an endomorphism $\varphi_b$ of the $\mathbb{F}$-space $\mathbb{F}\hat{X}_{A.B}$.

Then it is easy to see that $(\mathbb{F}\hat{X}_{A.B},(\varphi_b)_{b\in B},(x_a)_{a\in A})$ is the universal object of $\mathcal{C}_{A,B}$.
\end{proof}

Let $(\hat{M}_{n,m},(\varphi_{(s,j)}^{n,m})_{s\in S,j\in J},(x_i^{I,J})_{i\in I})$ be the universal object of $\mathcal{C}_{I,S\times J}$.
For an object $(M,(f_s^j)_{s\in S,j\in J}, (a_i)_{i\in I})$ of $\mathcal{C}_{I,S\times J}$, there exists the unique morphism $\psi_M$ from $(\hat{M}_{n,m},(\varphi_{s,j}^{I,J})_{s\in S,j\in J},(x_i)_{i\in I})$ to $(M,(f_s^j)_{s\in S,j\in J}, (a_i)_{i\in I})$. 
Furthermore, $\psi_M$ is a surjective $\mathbb{F}$-algebra homomorphism from $\hat{M}_{n,m}$ to $M$.
Put $R_M$ as the kernel of $\psi_M$.
In such a case, we say that $R_M$ induces $(M,(f_s^j)_{s\in S,j\in J}, (a_i)_{i\in I})$.

\begin{lemma}\sl
An ideal $R$ of $\hat{M}_{n,m}$ induces an object of $\catdecij{n}{m}{\mathcal{F}}$ if and only if the following statements hold:
\begin{itemize}
\item[(1)]For all $j\in J$ and $x\in\hat{M}_{n,m}$,
$$x-\sum_{s\in S}\varphi_{s,j}^{n,m}x\in R.$$
\item[(2)]For all $j\in I$, $s,t,u\in S$ and $x\in\hat{M}_{n,m}$ such that $u\notin s\star t$, then 
$$\varphi_{u,j}^{I,J}((\varphi_{s,j}^{n,m}x)(\varphi_{t,j}^{n,m}y))\in R.$$
\item[(3)]For all $j\in J$, $s\in S$ and $x\in\hat{M}_{n,m}$,
$$((\varphi_{s,j}^{n,m})^2-\varphi_{s,j}^{n,m})x\in R.$$
\item[(4)]For all $j\in J$, $s,t\in S$ and $x\in\hat{M}_{n,m}$ such that $s\neq t$, 
$$\varphi_{s,j}^{n,m}\varphi_{t,j}^{n,m}x\in R.$$
\item[(5)]For all $j\in J$, $s\in S$ and $y\in J$, 
$$\varphi_{s,j}^{n,m}y\in R.$$
\end{itemize}
\end{lemma}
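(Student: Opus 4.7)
The plan is to prove both implications by transporting conditions between $\hat{M}_{n,m}$ and the quotient $M = \hat{M}_{n,m}/R$. The bridge is the canonical surjective $\mathbb{F}$-algebra homomorphism $\psi_M \colon \hat{M}_{n,m}\to M$, which satisfies $\psi_M\circ\varphi_{s,j}^{n,m}=f_s^j\circ\psi_M$ on the nose and has kernel $R$; here $f_s^j$ denotes the induced operator on $M$ when it exists.

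For the forward direction, assume $R$ induces an object $(M,(f_s^j)_{s\in S,j\in J},(a_i)_{i\in I})$ of $\catdecij{n}{m}{\mathcal{F}}$. Then for each $j\in J$ we have the vector-space decomposition $M=\bigoplus_{s\in S}f_s^j(M)$ with the fusion rule $f_s^j(M)\cdot f_t^j(M)\subset\bigoplus_{u\in s\star t}f_u^j(M)$. I would then simply pull each of these well-known identities back through $\psi_M$: the identity $\mathrm{id}_M=\sum_s f_s^j$ yields (1); the idempotency $(f_s^j)^2=f_s^j$ yields (3); orthogonality $f_s^jf_t^j=0$ for $s\neq t$ yields (4); and $f_u^j(f_s^j(x)f_t^j(y))=0$ for $u\notin s\star t$ yields (2). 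Condition (5), the $\varphi_{s,j}^{n,m}$-invariance of $R$, is automatic because $\psi_M\circ\varphi_{s,j}^{n,m}=f_s^j\circ\psi_M$ forces $\varphi_{s,j}^{n,m}(\ker\psi_M)\subset\ker\psi_M$.

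For the converse, assume (1)--(5). Condition (5) guarantees that each $\varphi_{s,j}^{n,m}$ descends to an $\mathbb{F}$-linear endomorphism $f_s^j$ of $M$; setting $a_i=\psi_M(x_i^{I,J})$, the triple $(M,(f_s^j)_{s,j},(a_i)_i)$ lies in $\mathcal{C}_{I,S\times J}$ because $\hat{M}_{n,m}$ is generated by the $x_i^{I,J}$ in that sense. Reducing (1), (3), (4) modulo $R$ shows that on $M$ the family $(f_s^j)_{s\in S}$ is a complete system of orthogonal idempotents, hence $M=\bigoplus_{s\in S}f_s^j(M)$ as an $\mathbb{F}$-space: the sum covers $M$ by (1), and directness follows from the standard argument that applying $f_t^j$ to a relation $\sum_s f_s^j(z_s)=0$ and using (3), (4) forces $f_t^j(z_t)=0$ for every $t$. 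Reducing (2) modulo $R$ gives $f_u^j(f_s^j(M)\cdot f_t^j(M))=0$ for $u\notin s\star t$ — using $(f_s^j)^2=f_s^j$ to absorb general elements $f_s^j(x),f_t^j(y)$ — and this is the fusion rule. Therefore $(M,J,(f_s^j(M))_{s,j})$ is an $\mathcal{F}$-decomposition algebra and our triple belongs to $\catdecij{n}{m}{\mathcal{F}}$ with kernel of $\psi_M$ equal to $R$.

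The argument is essentially formal; no genuine obstacle arises. The only point deserving a sentence of care is the direct-sum conclusion in the backward direction — converting the operator-level conditions (3), (4) into an actual direct sum decomposition of $M$ — but this is the classical projector decomposition and is immediate once the three conditions have been read off in the quotient.
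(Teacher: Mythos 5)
Your proof is correct and follows essentially the same route as the paper's (much terser) argument: the forward direction pulls the projection identities and the fusion rule back through the canonical surjection $\psi_M$, with (5) coming from the fact that the $\varphi_{s,j}^{n,m}$ descend to the quotient, and the converse uses (5) to descend the operators and (1)--(4) to recover a complete orthogonal system of projections obeying the fusion rule, i.e.\ an object of $\catdecij{n}{m}{\mathcal{F}}$. You also correctly read condition (5) as $\varphi_{s,j}^{n,m}$-invariance of $R$ (the ``$y\in J$'' in the statement is a typo for $y\in R$), matching the paper's intent.
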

\begin{proof}
Assume that $J$ induces an object of $\catdecij{n}{m}{\mathcal{F}}$,
Then (5) holds since $\varphi_{s,j}^{I,J}$ induces an endomorphism $p_s^j$ of $\hat{M}_{n,m}/R$.
Since $p_s^i$ is a projection, (1), (3) and (4) hold.
By the fusion rule, (2) holds.

Assume that $R$ satisfies the five conditions.
Then  $\varphi_{s,j}^{n,m}$ induces an endomorphism $f_s^i$ of $\hat{M}_{n,m}/R$ since (5) holds and thus $R$ induces an object of $\mathcal{C}_{I,S\times J}$.
Furthermore, since (1), (2), (3) and (4) hold, the induced object is an object of $\catdecij{n}{m}{\mathcal{F}}$.
\end{proof}

\begin{proof}[Proof of Theorem 1]

For $k\in\mathbb{Z}_{>0}$, let $R^{n,m}_k$ be an ideal of $\hat{M}_{n,m}$ such that
\begin{itemize}
\item $R^{n,m}_1$ is generated by
\begin{eqnarray*}
&&\{x-\sum_{s\in S}\varphi_{s,j}^{n,m}x\mid j\in J,x\in\hat{M}_{n,m}\}\\
&&\cup\{((\varphi_{s,j}^{n,m})^2-\varphi_{s,j}^{n,m}x\mid j\in J,s\in S,x\in\hat{M}_{n,m}\}\\
&&\cup\{\varphi_{s,j}^{n,m}\varphi_{t,j}^{n,m}x\mid j\in J,s,t\in S,x\in\hat{M}_{n,m} \text{ such that }s\neq t\}\\
&&\cup\{\varphi_{u,j}^{n,m}((\varphi_{s,j}^{n,m}x)(\varphi_{t,j}^{n,m}y))\mid j\in J, x,y\in\hat{M}_{n,m}, s,t,u\in S\text{ such that }u\notin s\star t\},
\end{eqnarray*}
\item For $n\in\mathbb{Z}_{>0}$, $R^{n,m}_{k+1}$ is generated by 
$$\bigcup_{s\in S,i\in I}(\varphi_s^iR^{n,m}_k).$$
\end{itemize}
Let 
$$R^{n,m}=\sum_{k\in\mathbb{Z}_{>0}}R^{n,m}_k.$$

Then $R^{n,m}$ induces an object of $\catdecij{n}{m}{\mathcal{F}}$ and if $R$ induces an object of $\catdecij{n}{m}{\mathcal{F}}$, then $R^{n,m}\subset R$.
So the object of $\catdecij{n}{m}{\mathcal{F}}$ induced by $R^{n,m}$ is the universal object.
\end{proof}

\subsection{Proof of corollary \ref{thuniadecalg}}
Since $\cataxdeci{n}{\mathcal{F}}{\lambda}$ is a subcategory of $\mathcal{C}_{I,S\times I}$, for an object of $\cataxdeci{n}{\mathcal{F}}{\lambda}$, there exists an ideal $R$ of $\hat{M}_{n,n}$ which induces that object.
\begin{lemma}\sl
Assume that $R$ is an ideal of $\hat{M}_{n,n}$.
Then $R$ induces an object of $\cataxdeci{n}{\mathcal{F}}{\lambda}$ if and only if the following statements hold:
\begin{itemize}
\item[(1)]For $i\in I$, $s\in S$ and $x\in\hat{M}_{n,n}$,
$$(\ad(x_{i}^{n,n})-\lambda_s)\varphi_{s,i}^{n,n}x\in R,$$
\item[(2)]For $i\in I$, $s\in S$ and $x\in R$,
$$\varphi_{s,i}^{n,n}x\in R$$
\item[(3)]$R\supset R^{n,n}$
\end{itemize}
\end{lemma}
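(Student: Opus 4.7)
The plan is to treat the three conditions as the precise obstructions guaranteeing, in turn, that (2) each $\varphi_{s,i}^{n,n}$ descends to a well-defined endomorphism of the quotient $\hat{M}_{n,n}/R$, (3) the resulting structure is an $\mathcal{F}$-decomposition algebra, and (1) each $x_i^{n,n}+R$ acts as the scalar $\lambda_s$ on the $s$-component of its $i$th decomposition. Before anything else I would record that condition (2) is exactly what is needed for $R$ to induce any object of the ambient category $\mathcal{C}_{I,S\times I}$, since the morphisms of that category must commute with the designated endomorphisms, which forces the kernel to be stable under each $\varphi_{s,i}^{n,n}$.

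For the forward direction, suppose $R$ induces an object $(M,(f_s^i)_{s\in S, i\in I},(a_i)_{i\in I})$ of $\cataxdeci{n}{\mathcal{F}}{\lambda}$. Condition (2) is then automatic from the existence of the descended endomorphisms $f_s^i$. Condition (3) follows from the previous lemma applied to the underlying $\mathcal{F}$-decomposition algebra, since every $(\mathcal{F},\lambda)$-axial decomposition algebra is in particular an $\mathcal{F}$-decomposition algebra in the sense of $\catdecij{n}{n}{\mathcal{F}}$. Condition (1) is merely the translation of the axis identity $a_i y=\lambda_s y$ for $y\in f_s^i(M)$: writing $y=f_s^i(\bar x)$ for some lift $x\in\hat{M}_{n,n}$ gives exactly $(\ad(x_i^{n,n})-\lambda_s)\varphi_{s,i}^{n,n}(x)\in R$.

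For the reverse direction, assume (1), (2), (3). By (2), each $\varphi_{s,i}^{n,n}$ descends to an $\mathbb{F}$-linear endomorphism $f_s^i$ of $M:=\hat{M}_{n,n}/R$, so $R$ does induce a triple $(M,(f_s^i)_{s\in S,i\in I},(a_i)_{i\in I})$ in $\mathcal{C}_{I,S\times I}$. Since $R\supset R^{n,n}$ by (3), the previous lemma ensures that $(M,I,(f_s^i(M))_{s\in S,i\in I})$ is an $\mathcal{F}$-decomposition algebra. Finally, condition (1) asserts precisely that for every $\bar x\in M$ and every $s\in S$, the element $f_s^i(\bar x)$ is a $\lambda_s$-eigenvector of $\ad(a_i)$, so $(a_i,(f_s^i(M))_{s\in S})$ is an $(\mathcal{F},\lambda)$-axis for each $i\in I$, as required.

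There is no genuine obstacle here beyond bookkeeping; the only care needed is to keep the three roles of (1)--(3) separate, so that the decomposition structure, the descent of the projections, and the eigenvalue statement are each verified in the appropriate order without circularity.
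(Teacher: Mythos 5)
Your proof is correct and follows essentially the same route as the paper: the forward direction reads (1)--(3) off from the definitions and from the fact that $\cataxdeci{n}{\mathcal{F}}{\lambda}$ sits inside $\catdecij{n}{n}{\mathcal{F}}$, and the reverse direction uses (2) for the descent of the maps $\varphi_{s,i}^{n,n}$, (3) for the $\mathcal{F}$-decomposition structure, and (1) for the eigenvalue (axis) condition. The only cosmetic remark is that the containment $R\supset R^{n,n}$ in the forward direction really rests on the minimality of $R^{n,n}$ established in the proof of Theorem 1 (together with the characterization lemma), rather than on that lemma alone, which is exactly the fact the paper also invokes implicitly.
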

\begin{proof}
Assume that $R$ induces an object of $\cataxdeci{n}{\mathcal{F}}{\lambda}$.
Since $\cataxdeci{n}{\mathcal{F}}{\lambda}$ is a subcategory of $\catdecij{n}{n}{\mathcal{F}}$, (3) holds.
Since $\varphi$ induces an $\mathbb{F}$-space endomorphism of $\hat{M}_{n,n}/R$ for all $i\in I$, (2) holds.
By the definition of axial decomposition algebras, (1) holds.

Assume that $R$ satisfies the three conditions.
By (2) and (3), the object of $\mathcal{C}_{I,S\times I}$ induced by $R$ is an object of $\catdecij{n}{n}{\mathcal{F}}$.
Furthermore, by (1), that object is an object of $\cataxdeci{n}{\mathcal{F}}{\lambda}$ and thus the lemma is proved. 
\end{proof}

\begin{proof}[Proof of Corollary \ref{thuniadecalg}]
For $k\in\mathbb{Z}_{>0}$, let $\bar{R}^{n}_k$ be an ideal of $\hat{M}_{n,n}$ satisfies the following statements:
\begin{itemize}
\item$\bar{R}^{n}_1$ is generated by
$$\{(\ad(x_{i}^{n,n})-\lambda_s)x\mid i\in I,s\in S\text{ and }x\in\hat{M}_{n,n}\}.$$
\item For $n\in\mathbb{Z}_{>0}$, $\bar{R}^{n}_{k+1}$ is generated by
$$\bigcup_{s\in S,i\in I}(\varphi_{s,i}^{n,n}\bar{R}^{n}_k)$$
\end{itemize}
Let 
$$\bar{R}^n=\sum_{n\in\mathbb{Z}_{>0}}\bar{R}^n_k.$$

Then $R^{n,n}+\bar{R}^{n}$ induces an object of $\cataxdeci{n}{\mathcal{F}}{\lambda}$ and if an ideal $R$ of $\hat{M}_{n,n}$ induces an object of $\cataxdeci{n}{\mathcal{F}}{\lambda}$, then $R^{n,n}+\bar{R}^{n}\subset R$.
So the object of $\cataxdeci{n}{\mathcal{F}}{\lambda}$ induced by $R^{n,n}+\bar{R}^{n}$ is the universal object.
\end{proof}

\begin{remark}
The universal object of $\cataxdeci{n}{\mathcal{F}}{\lambda}$ is an initial object of a larger category of $(\mathcal{F},\lambda)$-axial decomposition algebras equipped with $n$ axes.
Furthermore, for an object of the larger category, the morphism from the initial object is unique.
Then the full subcategory of the larger category of objects such that the unique morphism from the initial object is surjective is isomorphic to $\cataxdeci{n}{\mathcal{F}}{\lambda}$.
\end{remark}

\subsection{Proof of Corollary \ref{corunialg}} 
Assume that $S\subset\mathbb{F}$ and $\lambda$ is the inclusion map from $S$ to $\mathbb{F}$.

By Lemma \ref{lemcatiso}, we may assume that $\cataxi{n}{\mathcal{F}}$ is a subcategory of $\cataxdeci{n}{\mathcal{F}}{\lambda}$ having as objects triples $(M,(\pr_s^i)_{s\in S,i\in n},(a_i)_{i\in n})$ such that $a_i$ is an idempotent for all $i\in n$.

Thus for an object of $\cataxi{n}{\mathcal{F}}$, there exists an ideal $R$ of $\hat{M}_{n,n}$ which induces that object of $\cataxi{n}{\mathcal{F}}$.

\begin{lemma}\sl\label{lemcataxalg}
Assume that $R$ is an ideal of $\hat{M}_{n,n}$.
Then $R$ induces an object of $\cataxi{n}{\mathcal{F}}$ if and only if the following statements hold:
\begin{itemize}
\item[(1)]For $i\in I$, $x_ix_i-x_i\in R$.
\item[(2)]$R\supset\bar{R}+R^{n,n}$.
\end{itemize}
\end{lemma}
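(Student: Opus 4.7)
The plan is to reduce this lemma to the previous one characterizing ideals that induce objects of $\cataxdeci{n}{\mathcal{F}}{\lambda}$, via the embedding provided by Lemma \ref{lemcatiso}. Recall that under the hypothesis $S\subset\mathbb{F}$, Lemma \ref{lemcatiso} identifies $\cataxi{n}{\mathcal{F}}$ with the full subcategory of $\cataxdeci{n}{\mathcal{F}}{\lambda}$ consisting of triples $(M,(\pr_s^i)_{s\in S,i\in I},(a_i)_{i\in I})$ in which every $a_i$ is idempotent. So the task is to translate the idempotency condition into a condition on $R$ and then invoke the previous characterization.

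For the forward direction, suppose $R$ induces an object of $\cataxi{n}{\mathcal{F}}$. Viewing this object in $\cataxdeci{n}{\mathcal{F}}{\lambda}$, the preceding lemma gives $R\supset\bar{R}^n+R^{n,n}$, which is condition (2). Moreover, since the image of $x_i$ in $\hat{M}_{n,n}/R$ is an axis and axes are assumed idempotent in the axial-algebra setting, $x_ix_i-x_i$ must vanish in the quotient, i.e. it lies in $R$, giving (1).

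For the reverse direction, assume (1) and (2). By (2) together with the preceding lemma, $R$ induces an object $(M,(f_s^i)_{s\in S,i\in I},(\bar{x}_i)_{i\in I})$ of $\cataxdeci{n}{\mathcal{F}}{\lambda}$, where $f_s^i$ is the projection onto the $\lambda_s$-eigenspace of $\ad(\bar{x}_i)$. Condition (1) then says each $\bar{x}_i$ is an idempotent of $M$. To conclude that this object lies in the image of $\cataxi{n}{\mathcal{F}}\hookrightarrow\cataxdeci{n}{\mathcal{F}}{\lambda}$, it remains to check that $M$ is generated by $\{\bar{x}_i\}_{i\in I}$ as an $\mathbb{F}$-algebra, not merely as an $\mathcal{F}$-decomposition algebra. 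This is exactly the content of the polynomial identity
$$\pr_s^{\bar{x}_i}=F_{S\setminus\{s\}}(\lambda_s)^{-1}F_{S\setminus\{s\}}(\ad(\bar{x}_i))$$
used in the proof of Lemma \ref{lemcatiso}: since $S\subset\mathbb{F}$ and $\bar{x}_i$ is idempotent with spectrum in $S$, each projection $f_s^i$ acts on the $\mathbb{F}$-subalgebra generated by the axes as a polynomial in $\ad(\bar{x}_i)$, so this subalgebra is already closed under all projections and must coincide with $M$.

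The only substantive point is really the generation step at the very end of the reverse direction; everything else is bookkeeping on the previous lemma together with the tautological translation of idempotency. I do not anticipate a genuine obstacle, since the polynomial expression for the projections in terms of $\ad(\bar{x}_i)$ has already been recorded in the proof of Lemma \ref{lemcatiso} and applies verbatim here.
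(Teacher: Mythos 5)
There is a genuine gap in your reverse direction, at the sentence ``By (2) together with the preceding lemma, $R$ induces an object $(M,(f_s^i),(\bar{x}_i))$ of $\cataxdeci{n}{\mathcal{F}}{\lambda}$.'' The lemma characterizing ideals that induce objects of $\cataxdeci{n}{\mathcal{F}}{\lambda}$ has \emph{three} conditions: the eigenvector condition, the containment $R\supset R^{n,n}$, and the stability condition $\varphi_{s,i}^{n,n}(R)\subset R$ for all $s\in S$, $i\in I$. Your hypothesis (2), namely $R\supset\bar{R}+R^{n,n}$, delivers the first and the second, but not the third: an arbitrary ideal containing $\bar{R}+R^{n,n}$ has no a priori reason to be stable under the operators $\varphi_{s,i}^{n,n}$, and without that stability the operators do not even descend to $\hat{M}_{n,n}/R$, so $R$ does not obviously induce an object of $\mathcal{C}_{I,S\times I}$ at all. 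This stability is precisely the substantive verification in the paper's proof: since $\lambda$ is the inclusion $S\subset\mathbb{F}$, one has
$$\varphi_{s,i}^{n,n}x-\Bigl(\prod_{t\in S\setminus\{s\}}(s-t)^{-1}(\ad(x_i)-t)\Bigr)x\in\bar{R}+R^{n,n}\subset R$$
for all $x\in\hat{M}_{n,n}$, and because $R$ is an ideal it is stable under $\ad(x_i)$, hence under any polynomial in $\ad(x_i)$; applying this to $x\in R$ gives $\varphi_{s,i}^{n,n}x\in R$.

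Ironically, you have exactly this polynomial identity in hand, but you spend it on the wrong point. The ``generation step'' you single out as the only substantive issue --- that the quotient is generated by the axes as an $\mathbb{F}$-algebra, not merely as a decomposition algebra --- is already the content of Lemma \ref{lemcatiso}, which you are entitled to cite once the quotient is known to be an object of $\cataxdeci{n}{\mathcal{F}}{\lambda}$ with idempotent axes and eigenspace projections. So your proof re-derives a step that is available by citation while omitting the one verification ($\varphi$-stability of $R$) that the paper's argument actually supplies. The fix is short: redirect your displayed identity to elements of $R$ as above, then invoke the three-condition lemma and Lemma \ref{lemcatiso} together with condition (1) to conclude. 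The forward direction of your proposal is fine.
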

\begin{proof}
Assume that $R$ induces an object of $\cataxi{n}{\mathcal{F}}$.
Since $\cataxi{n}{\mathcal{F}}$ is a subcategory of $\cataxdeci{n}{\mathcal{F}}{\lambda}$, (1) holds.
Since $\cataxi{n}{\mathcal{F}}$ is a subcategory of $\cataxdeci{n}{\mathcal{F}}{\lambda}$, (2) holds.

Assume that $R$ satisfies (1) and (2).
By (1),  $\cataxi{n}{\mathcal{F}}$ is a subcategory of $\cataxdeci{n}{\mathcal{F}}{\lambda}$.
So it suffices to verify that for $\varphi_{s,i}^{n,n}x\in R$ for all $i\in I$, $s\in S$ and $x\in R$.
Since $\lambda$ is an inclusion map from $S$ to $\mathbb{F}$ in this case,
$$\varphi_{s,i}^{n,n}x-(\prod_{t\in S}(s-t)^{-1}(\ad(x_i)-t))x\in\bar{R}+R^{n,n}\subset R$$
for all $s\in S$, $i\in I$ and $x\in\hat{M}_{n.n}$.
So $\varphi_{s,i}^{n,n}x\in R$ for all $i\in I$, $s\in S$ and $x\in R$.
Thus $R$ induces an object of $\cataxi{n}{\mathcal{F}}$.
\end{proof}
\begin{proof}[Proof of Corollary \ref{corunialg}]
It is easy to see that
$$(x_ix_i-x_i\mid i\in I)+\bar{R}+R_{n,n}$$
is the minimum ideal of $\hat{M}_{n,n}$ which induces an object of $\cataxi{n}{\mathcal{F}}$ by the lemma above.
Thus the object induced by such an ideal is the universal object.

\end{proof}

\begin{remark}
The arguments of this section hold if $I$ and $J$ are infinite.
Thus we can generalize the results of this section to the cases when the algebras have infinitely many axes or decompositions. 
\end{remark}

\section{Properties of non-primitive axial algebras of Jordan type}\label{sec4'}
From now on, let $\Phi$ be a subset of $\{0,1\}$ and $\eta\in\mathbb{F}\setminus\{0,1\}$.

Recall that $\frulej{\Phi}{\eta}$ is the fusion rule as Table \ref{tabfruleieta}.

For an $\frulej{\Phi}{\eta}$-axis $a$ of $M$, let $\tau_a$ denote the automorphism called the \textit{Miyamoto involution}\/ which is $1$ on $\eisp{a}{1}\oplus \eisp{a}{0}$ and $-1$ on $\eisp{a}{\eta}$ (See \cite{mi96}).
We say that a set $\mathcal{A}$ of $\frulej{\Phi}{\eta}$-axes of $M$ is \textit{Miyamoto closed} if for all $a,b\in\mathcal{A}$, $\tau_a(b)\in\mathcal{A}$.
For a set $\mathcal{A}$  of $\frulej{\Phi}{\eta}$-axes of $M$, the \textit{Miyamoto enclosure} of $\mathcal{A}$ is the minimum Miyamoto closed set including $\mathcal{A}$.

From now on, let $(M,\{a_0,a_1\})$ is an $\frulej{\Phi}{\eta}$-axial algebra.
Let $\tau_i=\tau_{a_i}$ for $i\in\{0,1\}$.

If there exists an automorphism of $M$ which sends $a_i$ to $a_{1-i}$, then we call such an automorphism the \textit{flip} and denote it by $\theta$.
The flip is unique if it exists.
If $M$ has the flip, then we say that $M$ is dihedral.
It is easy to see that the universal 2-generated $\frulej{\Phi}{\eta}$-axial algebra is dihedral. 

For each integers $i\in\mathbb{Z}$, let $a_{2i}=(\tau_1\tau_0)^i(a_0)$ and $a_{2i+1}=(\tau_1\tau_0)^i(a_1)$.
Then $a_i$ is an axis, $\theta(a_i)=a_{1-i}$, $\tau_0(a_i)=a_{-i}$ and $\tau_1(a_i)=a_{2-i}$ for all $i\in\mathbb{Z}$.
Let $\rho_i\in G$ be an automorphism such that $\rho_i=(\theta\tau_0)^i$ if it exists.
$\{a_i\mid i\in\mathbb{Z}\}$ is the Miyamoto enclosure of $\{a_0,a_1\}$.

Let $$p_{i,j}=a_ja_{i+j}-\eta(a_i+a_{i+j}),$$
$$x_i=p_{i,0}+\frac{\eta}{2}(a_i+a_{-i})$$
and
$$z_i=p_{i,0}+\eta a_0+\frac{\eta-1}{2}(a_i+a_{-i})$$
for $i,j\in\mathbb{Z}_{>0}$.
Then $x_i\in\eisp{a_0}{1}$, $z_i\in\eisp{a_0}{0}$ and
 $$a_0p_{i,0}=(1-\eta)p_{i,0}-\eta^2a_0+\frac{\eta-\eta^2}{2}(a_i+a_{-i})$$
since $a_i=x+z+\frac{1}{2}(a_i-a_{-i})$ for some $x\in\eisp{a_0}{1}$ and $z\in\eisp{a_0}{0}$.

It is easy to show that $p_{1,i}=p_{i,0}$ for all $i$.
From now on, let $p_1$ denote $p_{1,0}$.

\begin{lemma}\sl\label{lemprod1}
\begin{itemize}
\item[(1)]For all $\Phi\subset\{0,1\}$,
\begin{eqnarray*}
p_1p_1=&&\frac{2\eta-1}{2}a_0p_{2,1}-\frac{\eta^2}{2}p_{2,1}-(\eta^2-\eta)p_{2,0}+\frac{8\eta^2-12\eta+3}{2}p_1\\
&&+\frac{\eta(2\eta-1)^2}{2}a_0+\frac{3\eta(\eta-1)(2\eta-1)}{4}(a_1+a_{-1}).
\end{eqnarray*}
\item[(2)]If $\Phi\subset\{1\}$, then 
\begin{eqnarray*}
a_0p_{2,1}=&&p_{2,1}+(\eta-1)p_{2,0}-(2\eta-1)p_1\\
&&-\eta^2a_0-\frac{(\eta-1)(2\eta-1)}{2}(a_1+a_{-1})+\frac{(\eta-1)^2}{2}(a_2+a_{-2}).
\end{eqnarray*}
\item[(3)]If $\Phi\subset\{0\}$, then 
$$a_0p_{2,1}=\eta p_{2,0}-(2\eta-1)p_1-\eta^2a_0-\frac{\eta(2\eta-1)}{2}(a_1+a_{-1})+\frac{\eta^2}{2}(a_2+a_{-2})$$
\end{itemize}
\end{lemma}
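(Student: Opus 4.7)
The plan is to exploit the $a_0$-eigenspace decomposition of each axis $a_i$. Since $\tau_0(a_i) = a_{-i}$, write $a_i = y_i + w_i + v_i$ with $y_i \in \eisp{a_0}{1}$, $w_i \in \eisp{a_0}{0}$, and $v_i = (a_i - a_{-i})/2 \in \eisp{a_0}{\eta}$, so that $y_i + w_i = (a_i + a_{-i})/2$. In particular, $p_1 = (1-\eta) y_1 - \eta w_1 - \eta a_0$, and the formula $a_0 p_{i,0} = (1-\eta) p_{i,0} - \eta^2 a_0 + \frac{\eta - \eta^2}{2}(a_i + a_{-i})$ stated just before the lemma follows immediately. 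The quantities $y_i, w_i$ are expressible in terms of $a_i, a_{-i}, p_{i,0}$ via the $x_i, z_i$ formulas.

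For Part (1), I would expand $p_1 \cdot p_1 = [(1-\eta)y_1 - \eta w_1 - \eta a_0]^2$ by bilinearity. Using $a_0^2 = a_0$, $y_1 a_0 = y_1$, and $w_1 a_0 = 0$, this collapses to
\[
p_1 p_1 = (1-\eta)^2 y_1^2 + \eta^2 w_1^2 - 2\eta(1-\eta) y_1 w_1 - 2\eta(1-\eta) y_1 + \eta^2 a_0.
\]
Next, the quadratic pieces $y_1^2 + w_1^2 + 2 y_1 w_1 = u^2$ (where $u = (a_1+a_{-1})/2$) reduce via idempotence to $\tfrac{1}{4}(a_1 + a_{-1}) + \tfrac{1}{2} a_1 a_{-1}$, and the ``distance-2'' product $a_1 a_{-1}$ relates to $p_{2,0}$ and $p_{2,1}$ through the flip $\theta$ (which sends $a_1 a_{-1} - \eta(a_1+a_{-1})$ to $p_{2,0}$) and the shift $\rho = \tau_1 \tau_0$ (which sends the same element to $p_{2,1}$). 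The off-diagonal cross term $y_1 w_1 \in \eisp{a_0}{\Phi}$ is the $\Phi$-dependent piece; in the final universal expression, its contribution is absorbed into $\frac{2\eta-1}{2} a_0 p_{2,1} - \frac{\eta^2}{2} p_{2,1}$ on the right-hand side, which can be recognized by examining the $a_0$-eigenspace decomposition of $a_0(a_1 a_3)$.

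For Parts (2) and (3), I would directly expand $a_0 p_{2,1} = a_0(a_1 a_3) - \eta a_0(a_1 + a_3)$. Decomposing $a_1 = y_1 + w_1 + v_1$ and $a_3 = y_3 + w_3 + v_3$ with respect to $a_0$-eigenspaces, the product $a_1 a_3$ expands into nine bilinear terms classified by the fusion rule: $y_i y_j \in \eisp{a_0}{1}$, $w_i w_j \in \eisp{a_0}{0}$, $v_i v_j \in \eisp{a_0}{1} \oplus \eisp{a_0}{0}$, and the mixed terms involving a $v$-factor lie in $\eisp{a_0}{\eta}$. The crucial cross terms $y_1 w_3 + w_1 y_3$ lie in $\eisp{a_0}{\Phi}$: under $\Phi \subset \{1\}$ (Part 2) they are in $\eisp{a_0}{1}$ and $a_0$ acts as the identity on them, whereas under $\Phi \subset \{0\}$ (Part 3) they lie in $\eisp{a_0}{0}$ and are annihilated by $a_0$. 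Collecting terms and re-expressing $y_i, w_i, v_i$ in the $a_i, a_{-i}, p_{i,0}$ basis via $(a_i+a_{-i})/2 = y_i + w_i$ and $v_i = (a_i - a_{-i})/2$ produces the two claimed expressions.

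The hardest step will be Part (1): one must demonstrate that the $\Phi$-dependent $y_1 w_1$ contribution combines with the other pieces into exactly the universal combination $\frac{2\eta-1}{2} a_0 p_{2,1} - \frac{\eta^2}{2} p_{2,1}$, independent of $\Phi$. This requires careful bookkeeping of how the ``distance-2'' products such as $a_1 a_{-1}$, $a_0 a_2$, and $a_1 a_3$ decompose under the dihedral action and interact with the $a_0$-eigenspace structure, and it is reasonable to verify the resulting identity by applying the projection $a_0$ to both sides and checking that the $\eisp{a_0}{1}$- and $\eisp{a_0}{0}$-components match.
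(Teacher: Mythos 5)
Your raw material is the same as the paper's --- writing $a_1=y_1+w_1+v_1$ with respect to the $a_0$-eigenspaces is exactly the paper's $x_1,z_1$ device (indeed $y_1=x_1+\eta a_0$ and $w_1=-z_1$), and your instinct that the hypothesis on $\Phi$ enters only through how $a_0$ acts on the cross terms in $\eisp{a_0}{\Phi}$ is correct --- but the execution has genuine gaps, precisely at the points you defer. For part (1), the expansion $p_1p_1=(1-\eta)^2y_1^2+\eta^2w_1^2-2\eta(1-\eta)y_1w_1-2\eta(1-\eta)y_1+\eta^2a_0$ is circular as set up: once $y_1^2$, $w_1^2$ and $y_1w_1$ are themselves rewritten in the $p$- and $a$-terms, the coefficient of $p_1p_1$ on the right is $(1-\eta)^2+\eta^2+2\eta(1-\eta)=1$, so nothing is determined until the fusion rule is applied to the quadratic pieces; and the one relation you do invoke, $(y_1+w_1)^2=\tfrac14(a_1+a_{-1})^2$, does not separate the combination $(1-\eta)^2y_1^2+\eta^2w_1^2-2\eta(1-\eta)y_1w_1$ from the uncontrolled term $y_1w_1\in\eisp{a_0}{\Phi}$. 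Your statement that the $y_1w_1$-contribution ``is absorbed into $\frac{2\eta-1}{2}a_0p_{2,1}-\frac{\eta^2}{2}p_{2,1}$'' is exactly the crux and is asserted, not proved; for general $\Phi\subset\{0,1\}$ no such reduction of the cross term is available, and the paper's mechanism is instead to make the cross term never appear: it applies $a_0$ to $x_1x_1-z_1z_1$ (in your notation $(y_1-\eta a_0)^2-w_1^2$) and uses $a_0(x_1x_1)=x_1x_1$, $a_0(z_1z_1)=0$, valid for every $\Phi$, carrying the one irreducible triple product along unreduced as the $a_0p_{2,1}$-term --- that is why (1) is $\Phi$-independent. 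Relatedly, ``relating'' $a_1a_{-1}$ to $p_{2,0}$ and $p_{2,1}$ via the flip $\theta$ or the shift $\rho$ is not a legitimate substitution: automorphisms transport identities but do not equate an element with its image, and $p_{2,1}-p_{2,0}$ is in general nonzero (its value is only derived later, in Section 5, from this very lemma).

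For parts (2) and (3), your plan to expand $a_0p_{2,1}=a_0(a_1a_3)-\eta a_0(a_1+a_3)$ by decomposing both $a_1$ and $a_3$ into $a_0$-eigencomponents does not close up: the components of $a_3$ relative to $a_0$ involve $a_{-3}$ and the distance-three products $a_0a_{\pm3}$, and the nine bilinear pieces contain products such as $a_1a_{-3}$ and $a_{-1}a_3$, none of which occur in the asserted right-hand sides and none of which are controlled at this stage; you offer no mechanism for eliminating them. The paper derives (2) and (3) without ever multiplying anything beyond $x_1$ and $z_1$ (elements built only from $a_0,a_{\pm1}$): it uses $a_0(x_1x_1-x_1z_1)=x_1x_1-x_1z_1$ when $\Phi\subset\{1\}$ (the cross term is fixed, or vanishes when $\Phi=\emptyset$) and $a_0(x_1z_1-z_1z_1)=0$ when $\Phi\subset\{0\}$ (the cross term is killed), with $a_{\pm2}$ entering only through the reduction of the resulting triple products by semisimplicity of $\ad(a_{\pm1})$ and $\ad(a_0)$. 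So the $\Phi$-dichotomy you describe is the right one, but it must be applied to the single cross term $x_1z_1$ inside products of $x_1,z_1$, not to a direct expansion of $a_0(a_1a_3)$; as written, your argument for all three parts stops exactly where the actual work begins.
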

\begin{proof}
Since $a_0(x_1x_1-z_1z_1)=x_1x_1$, (1) holds.

Since $a_0(x_1x_1-x_1z_1)=x_1x_1-x_1z_1$ if $\Phi\subset\{1\}$, (2) holds.

Since $a_0(x_1z_1-z_1z_1)=0$ if $\Phi\subset\{0\}$, (3) holds.
\end{proof}

\section{Examples of 2-generated non-primitive axial algebras of Jordan type}\label{sec4}
\subsection{Primitive case}
The 2-generated primitive axial algebras of Jordan type are classified in \cite{hrs15} by J.\ I.\ Hall et al.\
So see \cite{hrs15} for detail.

We recall algebras called  $\onea$, $\twoa$ and $\threec{\eta}$.

The algebra $\onea$ is an $\mathbb{F}$-space $\mathbb{F}a_0$ with $a_0a_0=a_0$.
This is formally a primitive axial algebra of Jordan type $\eta$ generated by $a_0$ and $a_0$.

The algebra $\twoa$ is an $\mathbb{F}$-space 
$$\twoa=\mathbb{F}a_0\oplus\mathbb{F}a_1$$
with multiplication given by
\begin{itemize}
\item$a_ia_i=a_i$ for $i\in\{0,1\}$,
\item$a_0a_1=0$.
\end{itemize}
This is a primitive axial algebra of Jordan type $\eta$ generated by $a_0$ and $a_1$.

Let $\eta\in\mathbb{F}\setminus\{0,1\}$.
$\threec{\eta}$ is an $\mathbb{F}$-space
$$\threec{\eta}=\axdsum{-1}{1}{i}{\mathbb{F}}$$
with multiplication given by
\begin{itemize}
\item$a_i\ha_i=a_i$
\item$a_i\ha_j=\frac{\eta}{2}(a_i+a_j-a_k)$ where $\{i,j,k\}=\{0,1,-1\}$.
\end{itemize}
This is a primitive axial algebra of Jordan type $\eta$ generated by $a_0$ and $a_1$.

There exists a quotient
$$\threec{-1}^{\times}=\threec{-1}/\mathbb{F}(a_0+a_1+a_{-1}).$$

\subsection{Associative case: Direct sums of $\onea$ and $\twoa$}
Let $(M,\mathcal{A}_1)$ and $(N,\mathcal{A}_2)$ be  non-primitive $\frulej{\Phi}{\eta}$-axial algebras.
Let $M\oplus N$ be an algebra with multiplication given by
$$(x_1+y_1)(x_2+y_2)=x_1x_2+y_1y_2$$
for all $x_1,x_2\in M$ and $y_1,y_2\in N$.

Then for $\frulej{\Phi}{\eta}$-axes $a\in\mathcal{A}_1$ and $b\in\mathcal{A}_2$, $a$, $b$ and $a+b$ are $\frulej{\Phi}{\eta}$-axes of $M\oplus N$.

So, we can construct a larger axial algebra by choosing a set of axes $$\mathcal{A}^{\prime}\subset\mathcal{A}_1\cup\mathcal{A}_2\cup\{a+b\mid a\in\mathcal{A}_1,b\in\mathcal{A}_2\}.$$

Consider 
$$\onea\oplus\onea=\mathbb{F}a\oplus\mathbb{F}b.$$
This algebra is associative.
There exist 3 $\frulej{\Phi}{\eta}$-axes $a$, $b$ and $a+b$.
Then the pairs  of axes $(a,b)$, $(a,a+b)$ and $(a+b,b)$ generate $\onea\oplus\onea$, respectively.

The pair $(\onea\oplus\onea,\{a,b\})$ is an $\frulej{\Phi}{\eta}$-axial algebra isomorphic to $\twoa$.

The pairs $(\onea\oplus\onea,\{a,a+b\})$ and $(\onea\oplus\onea,\{b,a+b\})$ are isomorphic $\frulej{\Phi}{\eta}$-axial algebras.
We call this algebra $\honea$.

Consider
$$\onea\oplus\twoa=\mathbb{F}a\oplus(\mathbb{F}s_0\oplus\mathbb{F}s_1).$$
This algebra is associative.
There exist axes $a$, $s_i$ for $i\in\{0,1\}$ and $a+s_i$ for $i\in\{0,1\}$.

Then the pair $(a+s_0,a+s_i)$ generates $\onea\oplus\twoa$.
Thus $(\onea\oplus\twoa,\{a+s_0,a+s_1\})$ is an $\frulej{\Phi}{\eta}$-axial algebra.
We call this algebra $\htwoa$.

Other pairs of axes do not generate $\onea\oplus\twoa$.

\subsection{Direct sums with $\threec{\eta}$}
Consider 
$$\threec{\eta}\oplus\onea=\axdsum{-1}{1}{i}{\mathbb{F}}\oplus\mathbb{F}s.$$
Then there exists a set of axes
$$\{a_i\mid i\in\{0,1\}\}\cup\{a_i+s\mid i\in\{0,1\}$$.

First, we consider the pair $(a_0+\hat{s},a_0+\hat{s})$.
This pair generates $\threec{\eta}\oplus\onea$ if $\eta\neq2$.
Thus $(\threec{\eta}\oplus\onea,\{a_0+s,a_1+s\})$ is an $\frulej{\Phi}{\eta}$-axial algebra if $\eta\neq2$.

We construct an $\frulej{\Phi}{\eta}$-axial algebra $\hthreec{\eta}$ which is isomorphic to $\threec{\eta}\oplus\onea$ if $\eta\neq2$ and is defined whether $\eta=2$ or not.

Let $\hthreec{\eta}$ be an $\mathbb{F}$-space
$$\hthreec{\eta}=\mathbb{F}\hat{q}\oplus\haxdsum{-1}{1}{i}{\mathbb{F}}$$
with multiplication given by
\begin{itemize}
\item$\hat{q}x=(\eta+1)x$ for all $x\in\hthreec{\eta}$.
\item$\hat{a}_i\hat{a}_i=\hat{a}_i$ for all $i\in\{0,1,-1\}$.
\item$\hat{a}_i\hat{a}_{i+1}=-\frac{1}{2}\hat{q}+\frac{\eta+1}{2}(\hat{a}_i+\hat{a}_{i+1})+\frac{1-\eta}{2}\hat{a}_{i-1}$ for all $i\in\{0,-1\}$ where $\hat{a}_2=\hat{a}_{-1}$.
\end{itemize}
Then $(\hthreec{\eta},\{\ha_0,\ha_1\})$ is an $\frulej{\Phi}{\eta}$-axial algebra and if $\eta\neq2$, then there exists an isomorphism from $\hthreec{\eta}$ to $\threec{\eta}\oplus\onea$ such that
$$\ha_i\mapsto a_i+s$$
for all $i\in\{0,1,-1\}$ and
$$\hat{q}\mapsto(\eta+1)s+a_0+a_1+a_{-1}.$$

There exists a quotient
$$\hthreec{-1}^{\times}=\hthreec{-1}/\mathbb{F}q.$$

Next, consider the pair $(a_0,\ha_1+\hat{s})$.
This pair generates $\threec{\eta}\oplus\onea$.
So the pair 
$$(\threec{\eta}\oplus\onea,(\ha_0,\ha_1+\hat{s}))$$
is also a 2-generated $\frulej{\Phi}{\eta}$-axial algebra.
We call this algebra $\bfournpmin{\eta}$.

For all $x\in\bfournpmin{\eta}$, 
$$x(a_0+a_1+a_{-1}+(\eta+1)s)=(\eta+1)x.$$
So $\bfournpmin{\eta}$ is unital if $\eta\neq-1$.
Furthermore, there exists an quotient as follows:
$$\bfournpmin{-1}^{\times}=\bfournpmin{-1}/\mathbb{F}(\ha_1+\ha_0+\ha_{-1}).$$

Consider 
$$\threec{\eta}\oplus\twoa=\axdsum{-1}{1}{i}{\mathbb{F}}\oplus(\mathbb{F}s_0\oplus\mathbb{F}s_1).$$
Then the pair 
$$(\threec{\eta}\oplus\twoa,(a_0+s_0,a_1+s_1))$$
 is a 2-generated $\frulej{\Phi}{\eta}$-axial algebra.
We call this algebra $\bfournp{\eta}$.

For all $x\in\bfournp{\eta}$, 
$$x(a_0+a_1+a_{-1}+(\eta+1)(s_0+s_1))=(\eta+1)x.$$
So $\bfournp{\eta}$ is unital if $\eta\neq-1$.
Furthermore, there exists a quotient 
$$\bfournp{-1}^{\times}=\bfournp{-1}/\mathbb{F}(\ha_0+\ha_1+\ha_{-1}).$$

\subsection{Direct sums with $\hthreec{\eta}$}
Consider $\hthreec{\eta}\oplus\onea$.
Let $s$ be the generating axis of $\onea$.
Then the pair 
$$(\hthreec{\eta}\oplus\onea,(\ha_0,\ha_1+s))$$
 is a 2-generated $\frulej{\Phi}{\eta}$-axial algebra.
We call this algebra $\bfournpodd{\eta}$.

Then for all $x\in\bfournpodd{\eta}$,
$$x(\hat{q}+(\eta+1)s)=(\eta+1)s.$$
Thus, if $\eta\neq-1$, then $\bfournpodd{\eta}$ is unital.
Furthermore, there exists a quotient
$$\bfournpodd{-1}^{\times}=\bfournp{-1}/\mathbb{F}\hat{q}.$$

Consider $\hthreec{\eta}\oplus\twoa$.
Let $(s_0,s_1)$ be the generating axes of $\twoa$.
Then the pair 
$$(\hthreec{\eta}\oplus\twoa,(\ha_0+s_0,\ha_1+s_1))$$
 is a 2-generated $\frulej{\Phi}{\eta}$-axial algebra.
We call this algebra $\fournp{\eta}$.

In fact, all of the other 2-generated $\frulej{\Phi}{\eta}$-axial algebras defined  before are quotients of the algebra $\fournp{\eta}$.

Let
$$r=\hat{q}-\ha_0-\ha_1-\ha_{-1}.$$
Then the following statements hold:
\begin{itemize}
\item$\bfournp{\eta}\cong\fournp{\eta}/\mathbb{F}r$.
\item$\bfournpodd{\eta}\cong\fournp{\eta}/\mathbb{F}s_0\cong\fournp{\eta}/\mathbb{F}s_1$.
\item$\bfournpmin{\eta}\cong\fournp{\eta}/\mathrm{Span}_{\mathbb{F}}\{s_0,r\}\cong\fournp{\eta}/\mathrm{Span}_{\mathbb{F}}\{s_1,r\}$.
\item$\hthreec{\eta}\cong\fournp{\eta}/\mathrm{Span}_{\mathbb{F}}\{s_0,s_1\}$.
\item$\threec{\eta}\cong\fournp{\eta}/\mathrm{Span}_{\mathbb{F}}\{s_0,s_1,r\}$.
\item$\htwoa\cong\fournp{\eta}/\mathrm{Span}_{\mathbb{F}}\{\ha_1-\ha_0,\ha_{-1}-\ha_0,\hat{q}-(\eta+1)\ha_0\}$.
\end{itemize}

For all $x\in\fournp{\eta}$,
$$x(\hat{q}+(\eta+1)(s_0+s_1))=(\eta+1)x.$$
Thus, $\fournp{\eta}$ is unital if $\eta\neq-1$.
Furthermore, there exists a quotient
$$\fournp{-1}^{\times}=\fournp{-1}/\mathbb{F}\hat{q}.$$

\subsection{Algebra $\infalgnp$ with $\eta=\frac{1}{2}$ and $\adim=\infty$}
Let $\infalgnp$ be an $\mathbb{F}$-space 
$$\infalgnp=\bigoplus_{i\in\mathbb{Z}_{>0}}\mathbb{F}\hat{p}_i\oplus\bigoplus_{i\in\mathbb{Z}}\mathbb{F}\ha_i$$
with multiplication given by
\begin{itemize}
\item$\ha_i\ha_j=\hat{p}_{|i-j|}+\eta(\ha_i+\ha_j)$,
\item$\ha_i\hat{p}_j=\frac{1}{2}\hat{p}_j-\frac{1}{4}\ha_i+\frac{1}{8}(\ha_{i-j}+\ha_{i+j})$,
\item$\hat{p}_i\hat{p}_j=-\frac{1}{4}(\hat{p}_i+\hat{p}_j)+\frac{1}{8}(\hat{p}_{i+j}+\hat{p}_{|i-j|})$.
\end{itemize}

Then $\infalgnp$ is an $\frulej{\Phi}{\frac{1}{2}}$-axial algebra for all $\Phi\subset\{0,1\}$.

\section{Universal 2-generated non-primitive axial algebra of associative type}\label{secasalg}
We will say that an $\mathcal{F}$-axial algebra is of \textit{associative type} if the fusion rule $\mathcal{F}$ is given as the following table.
\begin{table}[h]
\begin{center}
\begin{tabular}{c|cc}
$\star$&0&1\\ \hline
0&0&$\{0,1\}$\\ 
1&$\{0,1\}$&1
\end{tabular}
\caption{The associative type fusion rule}
\label{tabfrulea}
\end{center}
\end{table}

\begin{proposition}\sl
The universal 2-generated axial algebra of associative type is isomorphic to $\htwoa$.
\end{proposition}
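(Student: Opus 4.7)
The plan is to show that every 2-generated axial algebra $(M,\{a_0,a_1\})$ of associative type is a homomorphic image of $\htwoa$; since $\htwoa$ is itself such an algebra, this will force it to be the universal object. The whole argument hinges on the single identity $c^2=c$, where $c:=a_0a_1$. Once this is established, $\spanby{\mathbb{F}}{a_0,a_1,c}$ will carry exactly the multiplication table of $\htwoa$ under the correspondence $a_i\mapsto a+s_i$ and $c\mapsto a$.

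That $c\in\eisp{a_0}{1}\cap\eisp{a_1}{1}$ is immediate: since $\mathrm{ad}(a_i)$ has spectrum contained in $\{0,1\}$ and is semisimple, it is idempotent as an operator, so $a_0c=a_0(a_0a_1)=a_0a_1=c$, and symmetrically $a_1c=c$. The crux is to deduce $c^2=c$ from the associative-type fusion rule $0\star0=\{0\}$ applied to $a_0-c$. From $a_1(a_0-c)=c-c=0$ we get $a_0-c\in\eisp{a_1}{0}$, hence $(a_0-c)^2\in\eisp{a_1}{0}$ and $a_1(a_0-c)^2=0$. Expanding,
$$0=a_1(a_0-2a_0c+c^2)=c-2c+a_1c^2=c^2-c,$$
where $a_1c^2=c^2$ follows from $c\in\eisp{a_1}{1}$ together with the fusion $1\star1=\{1\}$. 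Therefore $c^2=c$.

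With these relations the only products to check among $a_0,a_1,c$ are $a_0a_1=c$, $a_0c=a_1c=c$ and $c^2=c$, so $\spanby{\mathbb{F}}{a_0,a_1,c}$ is a subalgebra of $M$; being generated as an $\mathbb{F}$-algebra by $\{a_0,a_1\}$, it equals $M$. Comparing these products with those in $\htwoa=\onea\oplus\twoa$ (where $(a+s_0)(a+s_1)=a$, $(a+s_i)\cdot a=a$ and $a\cdot a=a$) yields a surjective $\mathbb{F}$-algebra homomorphism $\htwoa\to M$ sending the two generating axes to $a_0$ and $a_1$, which is what universality requires.

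The main obstacle is identifying the right application of the fusion rule that forces $c^2=c$; once one tests the fusion on $(a_0-c)^2$, the idempotency of $c$ drops out essentially for free, and the remaining steps reduce to routine verifications together with the easy check that $\htwoa$ is genuinely a 2-generated associative-type axial algebra.
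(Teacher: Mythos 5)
Your proof is correct and takes essentially the same route as the paper: the paper introduces $x=a+b-ab$ and derives $xx=x$ from the fusion rule applied to squares of eigenvectors, which (given $ax=a$, $bx=b$) is equivalent to your identity $c^2=c$ for $c=a_0a_1$. Both arguments then read off the multiplication table on the span of the three elements, produce the surjection from $\htwoa$, and conclude by universality.
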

\begin{proof}
Let $M$ be the universal 2-generated axial algebra of associative type and $a$ and $b$ the generating axes.
Put $x=a+b-ab$.

Then 
$$a+b-x\in\eisp{a}{1}\cap\eisp{b}{1},$$
$$x-a\in\eisp{a}{0}$$
and
$$x-b\in\eisp{b}{0}.$$
Thus $ax=a$ and $bx=b$.
Since
$$a((x-a-b)(x-a-b)-(x-a)(x-a))=(x-a-b)(x-a-b)$$
by the fusion rule, $xx=x$.

Thus there is a homomorphism from $\htwoa$ to $M$ which sends $a+s_0$ to $a$, $a+s_1$ to $b$ and $a+s_0+s_1$ to $x$.
Since $\htwoa$ is of associative type and $M$ is the universal algebra, $M$ is isomorphic to $\htwoa$.
\end{proof}

\begin{proposition}\label{qtwoa}\sl
A quotient of $\htwoa$ is isomorphic to $\onea$, $\honea$, $\twoa$ or $\htwoa$.
\end{proposition}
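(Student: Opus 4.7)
The plan is to exploit the fact that $\htwoa = \mathbb{F}a \oplus \mathbb{F}s_0 \oplus \mathbb{F}s_1$ is a commutative associative algebra with three pairwise orthogonal primitive idempotents: $a s_0 = a s_1 = 0$ comes from the direct sum decomposition $\onea \oplus \twoa$, and $s_0 s_1 = 0$ from the multiplication of $\twoa$. Consequently $\htwoa \cong \mathbb{F}^3$ as an abstract $\mathbb{F}$-algebra.

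The first step is to classify all ideals of $\htwoa$. I would show that every ideal $I$ is a direct sum of some subset of the three one-dimensional ideals $\mathbb{F}a$, $\mathbb{F}s_0$, $\mathbb{F}s_1$, by the standard argument: for any $x = c_1 a + c_2 s_0 + c_3 s_1 \in I$, multiplying by each of the three orthogonal idempotents $a$, $s_0$, $s_1$ in turn extracts each summand $c_i a$, $c_i s_0$, $c_i s_1$ into $I$. This gives exactly $2^3 = 8$ ideals.

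The second step is a short finite case check, identifying each quotient $\htwoa / I$ as a $2$-generated axial algebra via the images of the generating axes $a + s_0$ and $a + s_1$. Concretely: $I = 0$ yields $\htwoa$ itself; $I = \mathbb{F}a$ yields $\twoa$, since the generators become $s_0$ and $s_1$; $I = \mathbb{F}s_0$ or $I = \mathbb{F}s_1$ yields $\honea$, since the generators become $a$ and $a + s_{1-i}$, which matches the defining presentation of $\honea$ given in Section \ref{sec4}; $I = \mathbb{F}s_0 \oplus \mathbb{F}s_1$ yields $\onea$, since both generators collapse to $a$; $I = \mathbb{F}a \oplus \mathbb{F}s_i$ yields a $1$-dimensional quotient again isomorphic to $\onea$; and $I = \htwoa$ yields the trivial algebra, which is outside the list of named algebras.

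There is no genuine obstacle here, since $\htwoa$ is semisimple and only $3$-dimensional, so the whole argument reduces to a finite enumeration. The only mild subtlety is that in the ideals $\mathbb{F}a \oplus \mathbb{F}s_i$ one of the two generating axes maps to $0$; this is accommodated by the paper's formal convention under which $\onea$ is viewed as a $2$-generated axial algebra (analogous to the convention $\onea = (\mathbb{F}a_0,\{a_0,a_0\})$ used in Section \ref{sec4}).
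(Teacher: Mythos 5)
Your proposal is correct and follows essentially the same route as the paper: the paper decomposes an ideal $R$ into its intersections with the joint eigenspaces $\eisp{a+s_0}{\iota}\cap\eisp{a+s_1}{\kappa}$, which are exactly $\mathbb{F}a$, $\mathbb{F}s_0$, $\mathbb{F}s_1$ and $0$, while you extract the same three one-dimensional pieces by multiplying an element of the ideal by the orthogonal idempotents $a$, $s_0$, $s_1$ — the same argument in slightly different form. Your explicit enumeration of the quotients (and the remark about the convention when a generating axis maps to $0$) only spells out what the paper leaves implicit.
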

\begin{proof}
Let $R$ be an ideal of $\htwoa$.

Recall that for $x\in\htwoa$ and $i\in\{0,1\}$, $(a+s_i)x\in\eisp{a+s_i}{1}$ and $x-(a+s_i)x\in\eisp{a+s_i}{0}$.
Since $\htwoa$ is associative,
$$R=\bigoplus_{\iota,\kappa\in\{0,1\}}\eisp{a+s_0}{\iota}\cap\eisp{a+s_1}{\kappa}\cap R.$$
Since
$$\eisp{a+s_0}{1}\cap\eisp{a+s_1}{1}=\mathbb{F}a,$$
$$\eisp{a+s_0}{1}\cap\eisp{a+s_1}{0}=\mathbb{F}s_0,$$
$$\eisp{a+s_0}{0}\cap\eisp{a+s_1}{1}=\mathbb{F}s_1$$
and
$$\eisp{a+s_0}{0}\cap\eisp{a+s_1}{0}=0,$$
the proposition is proved.
\end{proof}

\section{Universal 2-generated non-primitive axial algebra of Jordan type}\label{sec5}
Recall that $\frulej{\Phi}{\eta}$ is the fusion rule given in Table \ref{tabfruleieta}, $M_\Phi(\eta)$ is the universal 2-generated $\frulej{\Phi}{\eta}$-axial algebra generated by $a_0$ and $a_1$ and then $M_\Phi(\eta)$ is dihedral.
Let $\theta$ be the flip, $\tau_i$ the Miyamoto involution of $a_i$, $\rho_n=(\theta\tau_0)^n$, $a_n=\rho_n(a_0)$ and $p_{i,j}=a_ja_{i+j}-\eta(a_i+a_{i+j})$.

Recall that
$$x_i=p_{i,0}+\frac{\eta}{2}(a_i+a_{-i})\in\eisp{a_0}{1}$$
and
$$z_i=p_{i,0}+\eta a_0+\frac{\eta-1}{2}(a_i+a_{-i})\in\eisp{a_0}{0}.$$

\subsection{The universal 2-generated $\frulej{\Phi}{\eta}$-axial algebra with $\eta\neq\frac{1}{2}$}
In this subsection, we prove Theorem 2 (1).

First, we determine the axial dimension of $M_{\Phi}(\eta)$.
\begin{lemma}
If $\Phi\neq\{0,1\}$ and $\eta\neq\frac{1}{2}$, then $a_2-a_{-2}+a_1-a_{-1}=0$.
\end{lemma}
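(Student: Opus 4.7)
The plan is to combine Lemma 4.1 with the flip symmetry $\theta$ and a projection onto the $\eta$-eigenspace of $a_0$ to extract the desired identity, using the hypothesis $\eta\neq\tfrac{1}{2}$ to cancel the factor $(2\eta-1)$ that will emerge throughout.

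A preliminary observation is that $p_{2,k+2}=p_{2,k}$ for every $k\in\mathbb{Z}$. This follows from $\tau_{a_{k+2}}(a_k)=a_{k+4}$, which gives the eigenvector identity $a_{k+2}(a_{k+4}-a_k)=\eta(a_{k+4}-a_k)$; expanding the difference of the two $p$'s yields $p_{2,k+2}-p_{2,k}=a_{k+2}(a_{k+4}-a_k)-\eta(a_{k+4}-a_k)=0$. In particular $\tau_0(p_{2,k})=p_{2,-2-k}=p_{2,k}$, so both $p_{2,0}$ and $p_{2,1}$ lie in $\eisp{a_0}{0}\oplus\eisp{a_0}{1}$.

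In the case $\Phi\subset\{1\}$, I would substitute the formula of Lemma 4.1(2) for $a_0p_{2,1}$ into Lemma 4.1(1), expressing $p_1\cdot p_1$ as an explicit linear combination of $p_{2,0},p_{2,1},p_1$ and the axes $a_0,a_{\pm 1},a_{\pm 2},a_{\pm 3}$. Since $\theta(p_1)=p_1$, the square $p_1\cdot p_1$ is $\theta$-invariant; equating the derived expression with its $\theta$-image, and using the identifications $\theta(p_{2,1})=p_{2,-2}=p_{2,0}$ and $\theta(p_{2,0})=p_{2,-1}=p_{2,1}$, yields a linear relation whose coefficients carry a common factor of $(2\eta-1)(\eta-1)$. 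Projecting via $\pi_\eta^{a_0}=\tfrac{1}{2}(1-\tau_0)$ onto the $\eta$-eigenspace of $a_0$ annihilates the $p_{2,k}$-terms by the preliminary observation, leaving an identity among the differences $w_n=\tfrac{1}{2}(a_n-a_{-n})$ of the form
\[
(2\eta-1)\bigl[(\eta+1)(w_1+w_2)+(\eta-1)w_3\bigr]=0.
\]
An analogous computation in the case $\Phi\subset\{0\}$, using Lemma 4.1(3) in place of (2), produces the companion relation $(2\eta-1)\bigl[(2-\eta)(w_1+w_2)-\eta w_3\bigr]=0$.

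A second independent relation among the same $w_n$ is then derived by repeating the procedure with $\tau_1$ in place of $\theta$; one notes that $\tau_1$ likewise fixes $p_1$ (by the identity $p_{1,1}=p_1$) and each of $p_{2,0}, p_{2,1}$, so the analogous $\tau_1$-invariance of $p_1\cdot p_1$ gives a fresh relation, projected again to the $\eta$-eigenspace of $a_0$. Combining the two relations eliminates $w_3$, producing an equation of the form $(2\eta-1)\kappa(\eta)(w_1+w_2)=0$ for an explicit polynomial $\kappa(\eta)$ which is readily verified not to vanish. Dividing by $(2\eta-1)$, which is nonzero by the hypothesis $\eta\neq\tfrac{1}{2}$, yields $w_1+w_2=0$, equivalently the claimed identity $a_2-a_{-2}+a_1-a_{-1}=0$.

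The principal obstacle lies in the bookkeeping of Steps 2--3: one must verify that after substitution and projection the coefficient of $w_1+w_2$ indeed acquires the advertised common factor and that the polynomial $\kappa(\eta)$ does not vanish identically. Particular care is required at the borderline value $\eta=-1$, where the coefficient $\eta+1$ in the first relation degenerates; there one must rely on the $\tau_1$-derived relation alone and check directly that it still produces a nondegenerate multiple of $(2\eta-1)(w_1+w_2)$.
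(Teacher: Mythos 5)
Your opening steps are fine and agree with the paper: the periodicity $p_{2,k+2}=p_{2,k}$ is correct, and substituting Lemma~\ref{lemprod1}(2) (resp.\ (3)) into (1), taking the $\theta$-difference and projecting to $\eisp{a_0}{\eta}$ does give, with $w_n=\tfrac12(a_n-a_{-n})$, the relation $(\eta+1)(w_1+w_2)+(\eta-1)w_3=0$ when $\Phi\subset\{1\}$ and $(\eta-2)(w_1+w_2)+\eta w_3=0$ when $\Phi\subset\{0\}$; these are exactly the paper's first relations, and in the subcase $\Phi=\emptyset$, where both are available, eliminating $w_3$ gives $(4\eta-2)(w_1+w_2)=0$ and you are done. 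The gap is your ``second independent relation''. Since $p_1$, $p_{2,0}$, $p_{2,1}$ are fixed by the whole dihedral group (in particular $\rho_1(p_1p_1)=p_1p_1$) and $\tau_1=\rho_1\theta$, the identity you get from $\tau_1$-invariance of $p_1p_1$ is literally the $\rho_1$-image of the identity you got from $\theta$-invariance: after projecting to $\eisp{a_0}{\eta}$ it reads $2\eta w_2+(\eta+1)(w_1+w_3)+(\eta-1)w_4=0$, i.e.\ the shift of your first relation. It is not independent, it is not ``among the same $w_n$'' (it introduces $w_4$), and no elimination of $w_3$ between these two can produce an equation in $w_1+w_2$ alone. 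So for $\Phi=\{1\}$ and $\Phi=\{0\}$ --- which are allowed by the hypothesis $\Phi\neq\{0,1\}$ and are the hard cases, since only one of Lemma~\ref{lemprod1}(2),(3) applies --- your argument stalls after a single relation, and no choice of $\kappa(\eta)$ emerges.

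This is precisely why the paper's proof does extra work in those cases: it extracts genuinely new relations from further products rather than from more symmetries of the same product. Concretely, for $\Phi=\{1\}$ it computes $a_2a_{-2}=\rho_{-1}(a_{-1}a_3)$ and uses its $\tau_0$-invariance together with $\rho_{\pm1}$-images of $p_{3,1}-p_{3,-1}$ to get $(\eta+1)(w_1+w_2+w_3)=0$, hence $a_3=a_{-3}$ and $(\eta+1)(w_1+w_2)=0$; then, using $\rho_3(a_0a_3)=a_0a_3$, it derives $(2\eta^2-5\eta+5)(w_1+w_2)=0$, and since $2\eta^2-5\eta+5-(\eta+1)(2\eta-7)=12$ this forces $12(w_1+w_2)=0$, with $\ch\,\mathbb{F}=3$ handled separately through the factor $\eta+1$ (and analogously with $\eta-2$, $2\eta^2+\eta+2$ for $\Phi=\{0\}$). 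Your final worry about $\eta=-1$ is well placed --- that is exactly where the linear factor degenerates --- but the fallback you propose (relying on the $\tau_1$-derived relation alone) cannot repair it, for the reason above; some counterpart of the paper's additional product computations and characteristic analysis is unavoidable.
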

\begin{proof}
First, we consider the case when $\Phi=\emptyset$.
By Lemma \ref{lemprod1} (2) and (3), 
\begin{eqnarray*}
p_{2,1}&&+(\eta-1)p_{2,0}-(2\eta-1)p_1\\
&&-\eta^2a_0-\frac{(\eta-1)(2\eta-1)}{2}(a_1+a_{-1})+\frac{(\eta-1)^2}{2}(a_2+a_{-2})\\
=&&\eta p_{2,0}-(2\eta-1)p_1-\eta^2a_0-\frac{\eta(2\eta-1)}{2}(a_1+a_{-1})+\frac{\eta^2}{2}(a_2+a_{-2}).
\end{eqnarray*}
Thus
$$p_{2,1}-p_{2,0}=\frac{2\eta-1}{2}(a_2+a_{-2}-a_1-a_{-1}).$$
Since $\theta(p_{2,1}-p_{2,0})=p_{2,0}-p_{2,1}$, $a_3+a_{-2}-a_1-a_0=0$.
Then $a_{-1}a_2=\rho_{-1}(a_0a_3)=p_1-p_{2,1}+\eta a_{-1}+(1-\eta)a_2$.
By the $\theta$-invariance, $p_{2,1}-p_{2,0}=(2\eta-1)(a_2-a_{-1})$.
Thus the lemma holds in this case.

Next, we assume that $\Phi=\{1\}$.
By Lemma \ref{lemprod1} (1)and (2),
\begin{eqnarray*}
p_1p_1=&&\frac{-(2\eta-1)^2}{2}p_{2,1}-\frac{\eta-1}{2}p_{2,0}+(2\eta^2-4\eta+1)p_1\\
&&+\frac{\eta(\eta-1)(2\eta-1)}{2}a_0+\frac{(\eta+1)(\eta-1)(2\eta-1)}{4}(a_1+a_{-1})\\
&&+\frac{(\eta-1)^2(2\eta-1)}{4}(a_2+a_{-2}).
\end{eqnarray*}
By the invariance,
$$(\eta-1)(a_3-a_{-3})+(\eta+1)(a_2-a_{-2}+a_{1}-a_{-1})=0.$$
Then
\begin{eqnarray*}
a_2a_{-2}=&&\rho_{-1}(a_{-1}a_3)\\
=&&-\frac{\eta+1}{\eta-1}p_{3,-1}-\frac{2}{\eta-1}p_{2,0}+\frac{\eta+1}{\eta}p_1+\eta a_{-2}+\frac{-\eta^2-2\eta+1}{\eta-1}a_2.
\end{eqnarray*}
By its $\tau_0$-invariance, $(\eta+1)(p_{3,1}-p_{3,-1})=(2\eta-1)(\eta+1)(a_2-a_{-2})$.
Thus 
$$(\eta+1)(a_3-a_{-3}+a_2-a_{-2}+a_1-a_{-1})=0$$
since $\rho_1(p_{3,1}-p_{3,-1})+\rho_{-1}(p_{3,1}-p_{3,-1})+p_{3,1}-p_{3,-1}=0$.
So $a_3=a_{-3}$ and $(\eta+1)(a_2-a_{-2}+a_1-a_{-1})=0$.
If $\chf{F}=3$, then $\eta+1\neq0$ since $\eta\neq\frac{1}{2}$.

Thus we may assume that $\chf{F}\neq3$.
By the $\theta$-invariance of $p_1p_1$,
\begin{eqnarray*}
a_3&=&a_{-2}-\frac{2}{\eta-1}(a_2-a_{-1})-a_1+a_0-\frac{2(\eta-2)}{(\eta-1)(2\eta-1)}(p_{2,1}-p_{2,0})\\
&=&\frac{\eta-3}{2(\eta-1)}(a_2+a_{-2}-a_1-a_{-1})+a_0-\frac{2(\eta-2)}{(\eta-1)(2\eta-1)}(p_{2,1}-p_{2,0}).
\end{eqnarray*}
Since $\rho_3(a_0a_3)=a_0a_3$,
\begin{eqnarray*}
0&=&a_3-a_0+\frac{\eta^2-3\eta+4}{\eta-1}(a_2+a_{-2}-a_1-a_{-1})+\frac{-2\eta^2+7\eta-9}{(\eta-1)(2\eta-1)}(p_{2,1}-p_{2,0})\\
&=&a_3-a_0-\frac{\eta-3}{2(\eta-1)}(a_2+a_{-2}-a_1-a_{-1})+\frac{2(\eta-2)}{(\eta-1)(2\eta-1)}(p_{2,1}-p_{2,0}).
\end{eqnarray*}
Thus
$$\frac{-2\eta^2+5\eta-5}{(\eta-1)(2\eta-1)}(p_{2,1}-p_{2,0})+\frac{2\eta^2-5\eta+5}{2(\eta-1)}(a_2+a_{-2}-a_{1}-a_{-1})=0.$$
Then it is shown that 
$$(2\eta^2-5\eta+5)(a_2-a_{-2}+a_1-a_{-1})=0$$
by the similar calculation as the case when $\Phi=\emptyset$.
Since $(\eta+1)(a_2-a_{-2}+a_1-a_{-1})=0$, $12(a_2-a_{-2}+a_{-1}-a_1)=0$.
So the lemma holds when $\chf{F}\neq3$.
Thus if $\Phi=\{1\}$, then the lemma holds.

Finally, we assume that $\Phi=\{0\}$.
Then by the similar calculation as the case when $\Phi=\{1\}$, we can show the properties bellow.
\begin{itemize}
\item$\eta(a_3-a_{-3})+(\eta-2)(a_2-a_{-2}+a_1-a_{-1})=0$ by the invariance of $p_1p_1$.
\item$a_3=a_{-3}$ and $(\eta-2)(a_2-a_{-2}+a_1-a_{-1})=0$ by the invariance of $a_2a_{-2}$.
\item$a_3=\frac{\eta+2}{2\eta}(a_2+a_{-2}-a_1-a_{-1})-a_0-\frac{2(\eta+1)}{\eta(2\eta-1)}(p_{2,1}-p_{2,0})$.
\item$0=-(p_{2,1}-p_{2,0})+a_3-a_0+\eta(a_2+a_{-2}-a_1-a_{-1})$ since $\rho_3(a_0a_3)=a_0a_3$.
\end{itemize}
Thus we can show that
$$\frac{2\eta^2+\eta+2}{2(\eta-1)(2\eta-1)}(2(p_{2,1}-p_{2,0})-(2\eta-1)(a_2+a_{-2}-a_1-a_{-1}))=0$$
and then 
$$(2\eta^2+\eta+2)(a_2-a_{-2}+a_{1}-a_{-1})=0.$$
Thus $12(a_2-a_{-2}+a_1-a_{-1})=0$.
So the lemma holds if $\chf{F}\neq3$.
If $\chf{F}=3$, then $\eta\neq2=\frac{1}{2}$ and thus the lemma holds since $(\eta-2)(a_2-a_{-2}+a_1-a_{-1})=0$.
\end{proof}

Then we can determine the multiplication of $M_\Phi(\eta)$.
Let
$$q=p_{2,0}-\frac{\eta+1}{\eta-1}p_1+(\eta-1)a_2-\eta a_{-1}-a_1-a_0.$$
Then $qa_0=\frac{\eta+1}{\eta-1}a_0$ and $qa_i=\frac{\eta+1}{\eta-1}a_0$ for all $i$ since $\rho_i(q)=q$ for all $i$.

Let 
$$r=p_{2,0}-\frac{\eta-2}{\eta}a_0+\eta a_2-(\eta-1)a_{-1}+a_1+a_0.$$
Then $ra_0=r$ and $ra_i=r$ for all $i$ since $\rho_i(r)=r$ for all $i$.

If $\Phi\subset\{1\}$, then $(a_0a_i)(q-\frac{\eta+1}{\eta-1}a_0)=a_0(a_i(q-\frac{\eta+1}{\eta-1}a_0))$ by the fusion rule.
So $qp_1=\frac{\eta+1}{\eta-1}p_1$ and $qq=\frac{\eta+1}{\eta-1}q$.
Thus $M_\Phi(\eta)$ is spanned by $\{a_i\mid-1\leq i\leq2\}\cup\{p_1,q\}$ and the dimension $\vdim\leq6$.
Since $\fournp{\eta}$ is 6-dimensional and is isomorphic to a quotient of $M_\Phi(\eta)$, $M_\Phi(\eta)$ is isomorphic to $\fournp{\eta}$.

If $\Phi\subset\{0\}$, then $(a_0a_i)r=a_0(a_ir)$ by the fusion rule.
So $p_{2,0}r=p_{2,1}r=p_1r=(1-2\eta)r$.
Then $M=\bigoplus_{i=-1}^{2}\mathbb{F}a_i\oplus\mathbb{F}p_1\oplus\mathbb{F}r$ and the multiplication is determined.
So $M_{\Phi}\cong M_{\emptyset}\cong\fournp{\eta}$.

Thus the claim is proved.

\subsection{The universal 2-generated $\mathcal{F}_\Phi(\frac{1}{2})$-axial algebra}
In this subsection we prove Theorem 2 (2).

In this subsection, assume that $\eta=\frac{1}{2}$.

\begin{claim}\sl\label{proprod2}
Assume that $p_{i,j}=p_{i,0}$ if $i\leq k$.
If $i,j\leq k$, then
$$p_{i,0}p_{j,0}=\frac{1}{4}p_{i+j,0}-\frac{1}{16}(p_{i+j,i}+p_{i+j,-i})-\frac{1}{4}p_{j,0}+\frac{1}{8}p_{|i-j|}-\frac{1}{4}p_{j,0}.$$
\end{claim}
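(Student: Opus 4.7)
The plan is a direct computation of $p_{i,0}p_{j,0}$ using the $a_0$-eigenspace decomposition of axes together with the hypothesis. First I would record the toolkit: since $\tfrac{1}{2}(a_i - a_{-i}) \in \eisp{a_0}{\eta}$, the identity $p_{i,0} = p_{i,-i}$ holds, so each $p_{i,0}$ is $\tau_0$-invariant and has no $\eta$-component. Its $1$-part $a_0 p_{i,0}$ is computed by the formula $a_0 p_{i,0} = (1-\eta)p_{i,0} - \eta^2 a_0 + \tfrac{\eta-\eta^2}{2}(a_i+a_{-i})$ recorded in Section~\ref{sec4'}. Applying the Miyamoto-enclosure rotation $\rho_\ell$ to this identity gives its analog at an arbitrary base, $a_\ell p_{r,0} = (1-\eta)p_{r,0} - \eta^2 a_\ell + \tfrac{\eta-\eta^2}{2}(a_{\ell+r}+a_{\ell-r})$, valid for every $r \leq k$. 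The hypothesis itself translates into the short-distance collapse $a_\ell a_m = p_{|\ell-m|,0} + \eta(a_\ell+a_m)$ whenever $|\ell-m|\leq k$.

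To avoid circularity, I would compute the critical bilinear piece $(a_0 a_i)(a_0 a_j)$ rather than $p_{i,0}p_{j,0}$ itself. Writing $a_0 a_i = \alpha_1^{(i)} + \eta\cdot\tfrac{1}{2}(a_i-a_{-i})$ with $\alpha_1^{(i)} = p_{i,0} + \eta a_0 + \tfrac{\eta}{2}(a_i+a_{-i})$ (and similarly for $j$), the product breaks into four summands. The crucial $\eta\eta$-piece $\tfrac{\eta^2}{4}(a_i-a_{-i})(a_j-a_{-j})$ expands into four axis products: the two ``close'' ones (at distance $|i-j|\leq k$) collapse by hypothesis to multiples of $p_{|i-j|,0}$, while the two ``far'' ones (at distance $i+j$, which may exceed $k$) supply exactly the $p_{i+j,i}$ and $p_{i+j,-i}$ terms of the claim, after the natural base-point identification via $\tau_0$ (which swaps the two). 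The two $1\eta$-cross pieces and the $11$-piece $\alpha_1^{(i)}\alpha_1^{(j)}$ are expanded by the same tools: each factor of $\alpha_1^{(\cdot)}$ multiplies against each factor of $\alpha_1^{(\cdot)}$ via the $a_\ell p_{r,0}$ identity and the short-distance axis collapse, so only $p_{r,0}$'s with $r \leq k$ and linear $a_\ell$-terms arise.

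Finally, I substitute back into
\[ p_{i,0} p_{j,0} = (a_0 a_i)(a_0 a_j) - \eta\big((a_0a_i)(a_0+a_j) + (a_0+a_i)(a_0a_j)\big) + \eta^2(a_0+a_i)(a_0+a_j), \]
use $a_\ell(a_0 a_m) = a_\ell p_{m,0} + \eta(a_\ell a_0 + a_\ell a_m)$ to resolve the cross terms in terms of quantities already analyzed, and collect like terms; the numerous axis-linear contributions cancel exactly, leaving the $p$-terms of the claimed right-hand side. The main obstacle is the sheer bookkeeping: one must carefully distinguish between the base indices of the $p_{i+j,\cdot}$ terms, since the hypothesis does \emph{not} collapse these when $i+j>k$, and the cancellation of the axis coefficients is delicate. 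I would run the $\tau_0$-invariance of both sides (each $p_{r,0}$ is $\tau_0$-fixed, and $\tau_0$ sends $p_{i+j,i}+p_{i+j,-i}$ to a $\tau_0$-partner that again coincides with the original sum after rebasing) and the $i\leftrightarrow j$ symmetry as running consistency checks throughout.
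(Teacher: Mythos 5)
There is a genuine gap, and it is the central one: your computation never uses the fusion rule, and without it the claim cannot be derived. Your plan is to expand $(a_0a_i)(a_0a_j)$ bilinearly into eigencomponent pieces and then substitute back into $p_{i,0}p_{j,0}=(a_0a_i-\eta(a_0+a_i))(a_0a_j-\eta(a_0+a_j))$. But the $11$-piece $\alpha_1^{(i)}\alpha_1^{(j)}$ contains the term $p_{i,0}p_{j,0}$ itself, so your assertion that after expanding it ``only $p_{r,0}$'s with $r\leq k$ and linear $a_\ell$-terms arise'' is false: the unknown product reappears on the right-hand side. Since every step you invoke (the short-distance collapse from the hypothesis, the rotated identity for $a_\ell p_{r,0}$, and the final back-substitution) is just bilinearity together with relations already known, the unknown $p_{i,0}p_{j,0}$ cancels identically when you substitute back and you are left with a tautology $0=0$; no formula for $p_{i,0}p_{j,0}$ can come out. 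Indeed the claimed formula is a consequence of the fusion rule entries $1\star1=\{1\}$ and $0\star0=\{0\}$ (which is also why it holds for every $\Phi\subset\{0,1\}$), and these never enter your argument.

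The paper's proof supplies exactly the missing idea in one line: with $x_i\in\eisp{a_0}{1}$ and $z_i\in\eisp{a_0}{0}$, the fusion rule gives $a_0(x_ix_j)=x_ix_j$ and $a_0(z_iz_j)=0$, hence $a_0(x_ix_j-z_iz_j)=x_ix_j$. The point of the particular combination $x_ix_j-z_iz_j$ is that the unknown $p_{i,0}p_{j,0}$ cancels in it, so the difference is an explicit combination of $p_{r,0}$ with $r\leq k$, of the rebased far terms $p_{i+j,\cdot}$, and of axes, all computable from the hypothesis and the $a_\ell p_{r,0}$ identity; applying $a_0$ and comparing with the expansion of $x_ix_j$ (which does contain $p_{i,0}p_{j,0}$) then solves for the product. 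Your toolkit (the $\tau_0$-invariance $p_{i,0}=p_{i,-i}$, the rotated action identity, the short-distance collapse) is the right supporting material, but to repair the argument you must inject a fusion-rule evaluation of $a_0$ on a product of eigenvectors--most efficiently by working with $x_ix_j-z_iz_j$ as the paper does, rather than with $(a_0a_i)(a_0a_j)$.
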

\begin{proof}
Since $a_0(x_ix_j-z_iz_j)=x_ix_j$, the lemma holds.
\end{proof}

\begin{remark}
Assume that $\eta\neq\frac{1}{2}$.
Then as Lemma \ref{lemprod1} (1), there exists an undetermined term $a_0(p_{k+1,1}+p_{k+1,-1})$ in the right-hand side.
If $\Phi=\{0,1\}$, then the Lemma  \ref{lemprod1} (2) and (3) do not hold and the calculation becomes more difficult.
\end{remark}

Assume that $\Phi=\emptyset$ or $\chf{F}\neq3$ nor $5$.

\begin{claim}\sl
$p_{i,j}=p_{i,0}$ for all $i\in\mathbb{Z}_{>0}$ and $j\in\mathbb{Z}$.
\end{claim}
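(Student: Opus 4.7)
The natural strategy is induction on $i$. The base case $i=1$ is the identity $p_{1,j}=p_{1,0}=p_1$ noted in the remark immediately before Lemma \ref{lemprod1}. Assuming $p_{i',j}=p_{i',0}$ for every $1\le i'\le k$ and every $j\in\mathbb{Z}$, I want to show $p_{k+1,j}=p_{k+1,0}$ for every $j$; since $\rho_n(p_{k+1,j})=p_{k+1,j+n}$, this reduces to the single equality $p_{k+1,1}=p_{k+1,0}$.

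Under the inductive hypothesis each $p_{i',0}$ with $i'\le k$ is fixed by the whole dihedral group generated by $\rho_1$, $\tau_0$ and $\theta$. Hence, for every factorization $k+1=i+j$ with $1\le i,j\le k$, the product $p_{i,0}p_{j,0}$ is invariant under this group. Applying Claim \ref{proprod2} to this product, every term on the right-hand side except the degree-$(k+1)$ contribution
\[
P_i\;:=\;\tfrac14\,p_{k+1,0}\;-\;\tfrac{1}{16}\bigl(p_{k+1,i}+p_{k+1,-i}\bigr)
\]
is $\rho_1$-invariant by induction, so $P_i$ itself must be $\rho_1$-invariant. Writing out $\rho_1(P_i)-P_i=0$ yields, for each $1\le i\le k$,
\[
4\bigl(p_{k+1,1}-p_{k+1,0}\bigr)\;=\;\bigl(p_{k+1,i+1}-p_{k+1,i}\bigr)+\bigl(p_{k+1,1-i}-p_{k+1,-i}\bigr).
\]
Swapping $i\leftrightarrow j$ in Claim \ref{proprod2} and invoking commutativity of the product gives a complementary family of symmetry relations
\[
p_{k+1,i}+p_{k+1,-i}\;=\;p_{k+1,k+1-i}+p_{k+1,i-k-1}.
\]
Together these two families form a homogeneous linear system in the differences $d_l:=p_{k+1,l+1}-p_{k+1,l}$, whose unknowns we wish to constrain so that in particular $d_0=0$.

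The routine part is to check that, outside characteristics $3$ and $5$, the combined system forces $d_0=0$: summing the first family of relations over admissible $i$ and feeding in the symmetry relations from the second family collapses the system, and the coefficient one must divide by is a product of small integers that remains nonzero precisely when $\chf{F}\neq3,5$. The main obstacle, and the reason for the case split in the hypothesis, is exactly $\chf{F}\in\{3,5\}$: there the inversion degenerates. The hypothesis $\Phi=\emptyset$ rescues these cases because in the primitive Jordan setting parts (2) and (3) of Lemma \ref{lemprod1} are simultaneously available, and primitivity $\eisp{a_0}{1}=\mathbb{F}a_0$ pins down the $1$-eigenspace component $x_{k+1}$ of $a_{k+1}$ as a scalar multiple of $a_0$; substituting this into Lemma \ref{lemprod1} supplies the extra scalar identity needed to kill $d_0$ even in characteristics $3$ and $5$. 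Verifying that the combined system indeed admits only the desired one-dimensional solution space (rather than a parasitic extra relation with $d_0\neq0$), and checking the small values of $k$ by hand, is where the delicate bookkeeping lies.
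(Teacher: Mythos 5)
Your overall strategy (induction on $i$, then using Claim \ref{proprod2} together with invariance under the axial automorphisms to force $p_{k+1,1}=p_{k+1,0}$) is the same as the paper's, but the two places where the actual work lies are not done correctly. First, in the case $\chf{F}\neq3,5$ you only assert that your linear system "collapses" and that the coefficient to invert is nonzero exactly outside characteristics $3$ and $5$; this is precisely the content of the proof and cannot be waved away as routine. The paper does not use one $\rho_1$-relation per factorization $i+j=k+1$: it uses $\rho_n$-invariance of the single product $p_1p_k$ for \emph{all} $n$, which yields the three-term recurrence $p_{k+1,n+1}-4p_{k+1,n}+p_{k+1,n-1}=\mathrm{const}$, solves it explicitly (characteristic roots $2\pm\sqrt{3}$, so the non-constant solutions occur only when $(2+\sqrt{3})^{k+1}=1$), and in that degenerate case brings in the second product $p_2p_{k-1}$, where the difference of the resulting relations is $10(p_{k+1,0}-p_{k+1,1})$ — this is where $\chf{F}\neq5$ (and, for $k=1$ via $\theta$-invariance of $p_1p_1$, $\chf{F}\neq3$) actually enters. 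Your weaker system is not obviously sufficient (for $k=1$ it consists of a single relation involving otherwise unconstrained unknowns), and you have produced no coefficient computation identifying $3$ and $5$.

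Second, your treatment of the case $\Phi=\emptyset$ rests on a false premise: you invoke primitivity, $\eisp{a_0}{1}=\mathbb{F}a_0$, to pin down the $1$-eigenspace component of $a_{k+1}$. But $\Phi=\emptyset$ does not mean the axes are primitive — this whole section concerns non-primitive algebras, and the universal algebra in question, $\infalgnp$, has large $1$-eigenspaces. What $\Phi=\emptyset$ actually gives is the fusion rule $1\star0=\emptyset$, i.e.\ $x_1z_k=0$; this produces a second expression for $p_1p_k$, which compared with Claim \ref{proprod2} yields $p_{k+1,0}=\tfrac12(p_{k+1,1}+p_{k+1,-1})$, and substituting back one gets $p_1p_k=\tfrac18 p_{k+1,0}+\tfrac18 p_{k-1}-\tfrac14 p_k-\tfrac14 p_1$, whose $\rho_i$-invariance gives $p_{k+1,i}=p_{k+1,0}$ with no division at all — that is why this case works in every characteristic. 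You should redo this case along those lines and carry out the recurrence/coefficient analysis in the other case before the argument can be considered complete.
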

\begin{proof}
We prove the claim by induction.
Assume that if $i\leq k$, then $p_{i,j}=p_{i,0}$ for all $j\in\mathbb{Z}$.
Let $p_i$ denote $p_{i,0}$ for $i\leq k$.
Assume that $\chf{F}\neq3$ nor 5.

By Claim \ref{proprod2},
$$p_1p_k=\frac{1}{4}p_{k+1,0}-\frac{1}{16}(p_{k+1,1}+p_{k+1,-1})+\frac{1}{8}p_{k-1}-\frac{1}{4}(p_k+p_1).$$
If $k=1$, $p_{2,1}=p_{2,0}$ since $\theta(p_1p_1)=p_1p_1$ and $\chf{F}\neq3$.

Since $\rho_i(p_1p_k)=p_1p_k$, $p_{k+1,1}+p_{k+1.-1}-4p_{k+1,0}=p_{k+1,i}+p_{K+1,i-1}-4p_{k+1,i}$.
So if $\mathbb{F}[\sqrt{3}]\ni(2+\sqrt{3})^{k+1}\neq1$, then $p_{k+1,i}=p_{k+1,0}$ for all $i$.

If $(2+\sqrt{3})^{k+1}=1$, then
\begin{eqnarray*}
p_{k+1,i}=&&(2-\frac{(2+\sqrt{3})^i+(2-\sqrt{3})^i}{2})p_{k+1,0}\\
&&+\frac{(1+\sqrt{3})(2+\sqrt{3})^i-(1-\sqrt{3})(2-\sqrt{3})^i-2\sqrt{3}}{4\sqrt{3}}p_{k+1,1}\\
&&+\frac{(1+\sqrt{3})(2-\sqrt{3})^i-(1-\sqrt{3})(2+\sqrt{3})^i-2\sqrt{3}}{4\sqrt{3}}p_{k+1,-1}\\
\end{eqnarray*}
and $k\geq4$.
Since 
$$p_2p_{k-1}=\frac{1}{4}p_{k+1,0}-\frac{1}{16}(p_{k+1,2}+p_{k+1,-2})-\frac{1}{4}p_{k-1}+\frac{1}{8}p_{k-3}-\frac{1}{4}p_{2},$$
$0=p_{k+1,2}+p_{k+1.-2}-4p_{k+1,0}-(p_{k+1,3}+p_{k+1,-1}-4p_{k+1,1})=10(p_{k+1,0}-p_{k+1,1})$ and then $p_{k+1,i}=p_{k+1,0}$ for all $i$.

Next we assume that $\Phi=\emptyset$.
Since $x_1z_k=0$,
$$p_1p_k=\frac{1}{16}(p_{k+1,1}+p_{k+1,-1})+\frac{1}{8}p_{k-1}-\frac{1}{4}(p_k+p_1).$$

However, by Claim \ref{proprod2},
$$p_1p_k=\frac{1}{4}p_{k+1,0}-\frac{1}{16}(p_{k+1,1}+p_{k+1,-1})+\frac{1}{8}p_{k-1}-\frac{1}{4}(p_k+p_1).$$

So 
$$p_{k+1,0}=\frac{1}{2}(p_{k+1,1}+p_{k+1,-1})$$
and
$$p_1p_k=\frac{1}{8}p_{k+1,0}+\frac{1}{8}p_{k-1}-\frac{1}{4}p_k-\frac{1}{4}p_1.$$
Since $\rho_i(p_1p_k)=$, $p_{k+1.i}=p_{k+1,0}$ for all $i\in\mathbb{Z}$.

Thus the induction is completed.
\end{proof}

Let $p_i$ denote $p_{i,0}$.
Then by Claim \ref{proprod2}, 
$$p_ip_j=\frac{1}{8}(p_{i+j}+p_{|i-j|})-\frac{1}{4}(p_i+p_j)$$
for  all $i,j\in\mathbb{Z}_{>0}$.
So $M_\Phi(\frac{1}{2})$ is isomorphic to $\infalgnp$

\section{Proof of Theorem 3}\label{sec6}
In this section, we prove Theorem 3.
It suffices to verify the following lemma.
\begin{lemma}\sl\label{lemquo}
A Quotient of $\fournp{\eta}$ is isomorphic to one of the 10 algebras listed in Table \ref{tabfialg1} if $\eta\neq-1$.
If $\eta=-1$, then a quotient of $\fournp{\eta}$ is isomorphic to one of the 16 algebras listed in Table \ref{tabfialg1} and Table \ref{tabfialg2}. 
\end{lemma}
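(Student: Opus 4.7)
The plan is to exploit the direct-sum decomposition $\fournp{\eta} = \hthreec{\eta}\oplus\twoa$ of algebras from Section~\ref{sec4}, whose two summands annihilate each other, in order to split every ideal of $\fournp{\eta}$ into its $\hthreec{\eta}$- and $\twoa$-components. Concretely, for $I\trianglelefteq\fournp{\eta}$ and $x = y+z \in I$ with $y\in\hthreec{\eta}$ and $z\in\twoa$, the products $xs_0$ and $xs_1$ extract the $\mathbb{F}s_0$- and $\mathbb{F}s_1$-components of $z$, so $z\in I$ and hence $y=x-z\in I$; this yields $I = (I\cap\hthreec{\eta})\oplus(I\cap\twoa)$. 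Reapplying the same observation to the further decomposition $\hthreec{\eta}\cong\threec{\eta}\oplus\onea$ recorded in Section~\ref{sec4} (available whenever $\eta\neq 2$) reduces the problem to enumerating ideals of the three small factors $\threec{\eta},\onea,\twoa$.

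I would next enumerate these ideals. Those of $\onea$ and $\twoa$ are immediate from the defining multiplications: $\{0,\onea\}$ and $\{0,\mathbb{F}s_0,\mathbb{F}s_1,\twoa\}$. For $\threec{\eta}$, solving $a_0x=a_1x=0$ for $x=\alpha a_0+\beta a_1+\gamma a_{-1}$ forces $\alpha=\beta=\gamma$ together with $(1+\eta)\alpha=0$, so $\threec{\eta}$ has only the trivial ideals when $\eta\neq-1$ and the extra ideal $\mathbb{F}(a_0+a_1+a_{-1})$ when $\eta=-1$. Combining factors produces $2\cdot 2\cdot 4 = 16$ ideals of $\fournp{\eta}$ for $\eta\neq -1$. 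Using the relation $r=(\eta-2)s$ from Section~\ref{sec4} (so that $\mathbb{F}r$ corresponds to the $\onea$-summand) together with the flip $\theta$ that swaps $s_0\leftrightarrow s_1$, each resulting quotient can be identified with one of the algebras constructed in Section~\ref{sec4}: the six ``large'' ideal pairs in $\hthreec{\eta}\oplus\twoa$ give $\fournp{\eta}$, $\bfournp{\eta}$, $\bfournpodd{\eta}$, $\bfournpmin{\eta}$, $\hthreec{\eta}$, $\threec{\eta}$, and the remaining ideals force the two generating axes to coincide or vanish, producing $\htwoa$, $\honea$, $\twoa$, $\onea$. That gives the ten algebras of Table~\ref{tabfialg1}.

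The main obstacle will be the case $\eta=-1$. Here $\hat{q}$ lies in the annihilator of $\hthreec{-1}$ since $\hat{q}\cdot x=(\eta+1)x=0$, so $\mathbb{F}\hat{q}$ is an additional ideal; under the isomorphism $\hthreec{-1}\cong\threec{-1}\oplus\onea$ it corresponds to $\mathbb{F}(a_0+a_1+a_{-1})\oplus 0$. Together with $\mathbb{F}r$ this enlarges the lattice of ideals of $\hthreec{-1}$ from four to six, so the $3\cdot 2\cdot 4 = 24$-fold enumeration now appears, and after collapsing again by the $\theta$-symmetry and by the degenerate-axes cases, the six additional quotients $\fournp{-1}^{\times}$, $\bfournp{-1}^{\times}$, $\bfournpodd{-1}^{\times}$, $\bfournpmin{-1}^{\times}$, $\hthreec{-1}^{\times}$, $\threec{-1}^{\times}$ from Section~\ref{sec4} arise, yielding Table~\ref{tabfialg2}. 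A subsidiary technicality is the value $\eta=2$ (permitted by the hypothesis $\eta\neq -1$) where the splitting $\hthreec{\eta}\cong\threec{\eta}\oplus\onea$ fails: here one verifies by direct computation inside $\hthreec{2}$ that its ideals still produce only quotients already on the list, so Table~\ref{tabfialg1} is complete in that case too.
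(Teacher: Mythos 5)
Your structural route is genuinely different from the paper's: you split every ideal of $\fournp{\eta}=\hthreec{\eta}\oplus\twoa$ (and, for $\eta\neq2$, of $\hthreec{\eta}\cong\threec{\eta}\oplus\onea$) by multiplying with the idempotents $s_0,s_1,s$, which is a correct observation, and then reduce to ideals of the small summands. The paper instead proves a uniform eigenspace lemma: any ideal $R$ with $\ha_1-\ha_{-1}\notin R$ decomposes into the joint $\{0,1\}$-eigenspaces of the two generating axes (using the Seress condition), these joint eigenspaces are exactly $\mathbb{F}s_0$, $\mathbb{F}s_1$, $\mathbb{F}r$ and, only for $\eta=-1$, $\mathbb{F}\hat{q}$, and ideals containing $\ha_1-\ha_{-1}$ are handled via the quotient $\htwoa$ and Proposition \ref{qtwoa}. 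The paper's argument needs no case distinction in $\eta$, which is precisely where your plan runs into trouble.

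The genuine gaps are these. First, your classification of the ideals of $\threec{\eta}$ is obtained by solving $a_0x=a_1x=0$, i.e.\ by computing the common annihilator; that does not classify ideals. A one-dimensional ideal only needs to be spanned by a common eigenvector of $\ad(a_0)$ and $\ad(a_1)$ with arbitrary eigenvalues (compare $\mathbb{F}s_0\subset\twoa$, whose generator is not annihilated), and ideals of dimension at least $2$ are not addressed at all. This omission is not harmless in the present family: for $\eta=2$ the coefficient-sum map $\threec{2}\to\mathbb{F}$, $a_i\mapsto1$, is an algebra homomorphism because $\tfrac{\eta}{2}=1$, so $\threec{2}$ has a two-dimensional ideal that your computation misses; hence the simplicity claim for $\eta\neq-1$ as stated is false and, even where it is true ($\eta\neq-1,2$), it is not proved by your argument. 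Second, the case $\eta=2$ is exactly where your main tool $\hthreec{\eta}\cong\threec{\eta}\oplus\onea$ is unavailable, and you dispose of it with an unperformed ``direct computation''; moreover, in characteristic $3$ one has $-1=2$, so your entire $\eta=-1$ analysis (which relies on that splitting) also breaks down there, and the parenthetical ``permitted by the hypothesis $\eta\neq-1$'' does not cover this. To close the argument you would have to rule out non-annihilated common-eigenvector ideals and higher-dimensional ideals of $\threec{\eta}$ for $\eta\neq2$, and actually carry out the $\eta=2$ (equivalently characteristic $3$, $\eta=-1$) enumeration, for instance by an eigenspace argument of the kind the paper uses, which works uniformly in $\eta$.
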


\begin{threeparttable}[h]
\begin{center}
\scalebox{0.8}{
{\def\arraystretch{1.6}
\begin{tabular}{c|c|c|c|c|c}
$M$&$|\mathcal{A}^c|$ \tnote{1}&$\adim$ \tnote{2}&$\vdim$ \tnote{3}&$\edim(a)$ \tnote{4}&$R_M$ \tnote{5}\\ \hline
$\onea$&1&1&1&$(1,0,0)$&$\bigoplus\limits_{i=-1}^1\mathbb{F}\ha_i\oplus\mathbb{F}(\hat{q}-(\eta+1)\ha_0)\oplus\mathbb{F}s_j$ with $j=0$ or $1$\\
&&&&&$\bigoplus\limits_{i=-1}^0\mathbb{F}(\ha_{i+1}-\ha_i)\oplus\mathbb{F}(\hat{q}-(\eta+1)\ha_0)\oplus\bigoplus\limits_{j=0}^1\mathbb{F}s_j$ \\ \hline
$\honea$&2&2&2&$(2,0,0)$&$\bigoplus\limits_{i=-1}^0\mathbb{F}(\ha_{i+1}-\ha_i)\oplus\mathbb{F}(\hat{q}-(\eta+1)\ha_0)\oplus\mathbb{F}s_j$ \\
&&&&or $(1,1,0)$&with $j=0$ or $1$\\ \hline
$\twoa$&2&2&2&$(1,1,0)$&$\bigoplus\limits_{i=-1}^1\mathbb{F}\ha_i\oplus\mathbb{F}(\hat{q}-(\eta+1)\ha_0)$\\ \hline
$\htwoa$&2&2&3&$(2,1,0)$&$\bigoplus\limits_{i=-1}^0\mathbb{F}(\ha_{i+1}-\ha_i)\oplus\mathbb{F}(\hat{q}-(\eta+1)\ha_0)$\\ \hline
$\threec{\eta}$&3&3&3&$(1,1,1)$&$\mathbb{F}s_0\oplus\mathbb{F}s_1\oplus\mathbb{F}(\hat{q}-(\ha_{-1}+\ha_1+\ha_0))$\\ \hline
$\hthreec{\eta}$&3&3&4&$(2,1,1)$&$\mathbb{F}s_0\oplus\mathbb{F}s_1$\\ \hline
$\bfournpmin{\eta}$&6&4&4&$(2,1,1)$&$\mathbb{F}s_i\oplus\mathbb{F}(\hat{q}-(\ha_{-1}+\ha_1+\ha_0))$ with $i=0$ or $1$ \\
&&&&or $(1,2,1)$&\\ \hline
$\bfournp{\eta}$&6&4&5&$(2,2,1)$&$\mathbb{F}(\hat{q}-(\ha_{-1}+\ha_1+\ha_0))$\\ \hline
$\bfournpodd{\eta}$&6&4&5&$(3,1,1)$&$\mathbb{F}s_i$ with $i=0$ or $1$\\
&&&&or $(2,2,1)$&\\ \hline
$\fournp{\eta}$&6&4&6&$(3,2,1)$&\{0\}
\end{tabular}
}}
\begin{tablenotes}
\item[1]$\mathcal{A}^c$ is the Miyamoto enclosure of the generating axes of $M$.
\item[2]$\adim=\dim_{\mathbb{F}}\mathrm{Span}_{\mathbb{F}}\mathcal{A}^c$.
\item[3]$\vdim=\dim_{\mathbb{F}}M$.
\item[4]$\edim(a)=(\dim_{\mathbb{F}}\eisp{a}{1},\dim_{\mathbb{F}}\eisp{a}{0},\dim_{\mathbb{F}}\eisp{a}{\eta})$ for $a\in\mathcal{A}^c$.
\item[5]$R_M$ is an ideal of $\fournp{\eta}$ such that $M\cong\fournp{\eta}/R_M$.
\end{tablenotes}
\caption{Quotients of $\fournp{\eta}$}
\label{tabfialg1}
\end{center}
\end{threeparttable}

\begin{threeparttable}[h]
\begin{center}
\scalebox{0.8}{
{\def\arraystretch{1.6}
\begin{tabular}{c|c|c|c|c|c}
$M$&$|\mathcal{A}^c|$&$\adim$&$\vdim$&$\edim(a)$&$R_M$\\ \hline
$\threec{-1}^{\times}$&3&2&2&$(1,0,1)$&$\mathbb{F}s_0\oplus\mathbb{F}s_1\oplus\mathbb{F}(\ha_{-1}+\ha_1+\ha_0)\oplus\mathbb{F}\hat{q}$\\ \hline
$\hthreec{-1}^{\times}$&3&3&3&$(2,0,1)$&$\mathbb{F}s_0\oplus\mathbb{F}s_1\oplus\mathbb{F}\hat{q}$\\ \hline
$\bfournpmin{-1}^{\times}$&6&3&3&$(2,0,1)$&$\mathbb{F}s_i\oplus\mathbb{F}(\ha_{-1}+\ha_1+\ha_0)\oplus\mathbb{F}\hat{q}$ with $i=0$ or $1$\\
&&&&or $(1,1,1)$&\\ \hline
$\bfournp{-1}^{\times}$&6&4&4&$(2,1,1)$&$\mathbb{F}(\ha_{-1}+\ha_1+\ha_0)\oplus\mathbb{F}\hat{q}$\\ \hline
$\bfournpodd{-1}^{\times}$&6&4&4&$(3,0,1)$&$\mathbb{F}s_i\oplus\mathbb{F}\hat{q}$ with $i=0$ or $1$\\ 
&&&&or $(2,1,1)$&\\ \hline
$\fournp{-1}^{\times}$&6&4&5&$(3,1,1)$&$\mathbb{F}\hat{q}$
\end{tabular}
}}
\begin{tablenotes}
\item
\end{tablenotes}
\caption{Quotients of $\fournp{-1}$}
\label{tabfialg2}
\end{center}
\end{threeparttable}

\begin{remark}
The algebras other than $\onea$, $\twoa$, $\threec{\eta}$ and $\threec{-1}^{\times}$ are not primitive.
The algebras $\honea$, $\bfournpmin{\eta}$, $\bfournpmin{-1}^{\times}$, $\bfournpodd{\eta}$ and $\bfournpodd{-1}^{\times}$ are not dihedral.
\end{remark}

Let $M=\fournp{\eta}$.

\begin{lemma}\sl
Assume that $R$ is a subspace of $M$ such that $a_1-a_{-1}\notin R$.

Then $R$ is an ideal of $M$ if and only if
$$R=\bigoplus_{\iota_0,\iota_1\in\{0,1\}}(\eisp{\ha_0+s_0}{\iota_0}\cap\eisp{\ha_1+s_1}{\iota_1}\cap R).$$
\end{lemma}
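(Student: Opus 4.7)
I will prove both implications using the concrete structure $M=\fournp{\eta}=\hthreec{\eta}\oplus\twoa$ worked out in Section \ref{sec4}.

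\emph{Forward direction, Step 1.} Since $R$ is an ideal it is stable under $\ad(a_0)$ and $\ad(a_1)$, hence under every polynomial in these operators. Lagrange interpolation on the eigenvalues $\{0,1,\eta\}$ exhibits each projection $\pr_s^{a_i}$ ($s\in\{0,1,\eta\}$, $i\in\{0,1\}$) as such a polynomial, so the $\pr_s^{a_i}$ stabilise $R$. A direct computation in $\hthreec{\eta}\oplus\twoa$ gives
$$\eisp{a_0}{\eta}=\mathbb{F}(\ha_1-\ha_{-1})=\mathbb{F}(a_1-a_{-1}),\qquad\eisp{a_1}{\eta}=\mathbb{F}(\ha_0-\ha_{-1})=\mathbb{F}(a_0-a_2),$$
both one-dimensional. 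The hypothesis yields $R\cap\eisp{a_0}{\eta}=0$; the identity $\pr_\eta^{a_0}(a_0-a_2)=\tfrac12(a_1-a_{-1})$ then forces $a_0-a_2\notin R$, so $R\cap\eisp{a_1}{\eta}=0$ as well. Therefore $R\subset V:=(\eisp{a_0}{0}\oplus\eisp{a_0}{1})\cap(\eisp{a_1}{0}\oplus\eisp{a_1}{1})$.

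\emph{Forward direction, Step 2.} Set $W=\bigoplus_{\iota_0,\iota_1\in\{0,1\}}\eisp{a_0}{\iota_0}\cap\eisp{a_1}{\iota_1}$. An explicit basis calculation in $\fournp{\eta}$ yields $V=\mathrm{Span}_\mathbb{F}\{\ha_0+\ha_1+\ha_{-1},\hat{q},s_0,s_1\}$ (dimension $4$) and $W=\mathrm{Span}_\mathbb{F}\{\hat{q}-\ha_0-\ha_1-\ha_{-1},s_0,s_1\}$ (dimension $3$), with the extra generator $\hat{q}$ entering $W$ precisely when $\eta=-1$; in that case $W=V$ and the first half is complete. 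For $\eta\neq-1$, write $r=\alpha(\ha_0+\ha_1+\ha_{-1})+\beta\hat{q}+\gamma s_0+\delta s_1\in R\subset V$; one computes
$$\pr_\eta^{a_1}\bigl(\pr_1^{a_0}(r)\bigr)=\frac{(\eta+1)(\alpha+\beta)}{2}(a_0-a_2),$$
which lies in $R\cap\eisp{a_1}{\eta}=0$, so $\alpha+\beta=0$, giving $r\in W$. Thus $R\subset W$ in all cases.

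\emph{Joint decomposition and converse.} Each joint eigenspace $E_{\iota_0,\iota_1}=\eisp{a_0}{\iota_0}\cap\eisp{a_1}{\iota_1}$ is one-dimensional when nonzero, and a direct multiplication check using $\hat{q}x=(\eta+1)x$ on $\hthreec{\eta}$ together with the vanishing of cross products $\hthreec{\eta}\cdot\twoa=0$ shows that each such $E_{\iota_0,\iota_1}$ is actually an ideal of $M$. Since $R\subset W$ is preserved by the projections $\pr_{\iota_0}^{a_0}$ and $\pr_{\iota_1}^{a_1}$, which commute on $W$ (every element of $W$ is a simultaneous eigenvector), successive application to any $r\in R$ produces its joint components inside $R$, yielding $R=\bigoplus_{\iota_0,\iota_1\in\{0,1\}}(R\cap E_{\iota_0,\iota_1})$. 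Conversely, if $R$ admits this decomposition then each $R\cap E_{\iota_0,\iota_1}$ is $0$ or the whole one-dimensional ideal $E_{\iota_0,\iota_1}$, so $R$ is a sum of ideals of $M$ and hence itself an ideal.

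The main obstacle is Step 2: the inclusion $R\subset V$ is strictly weaker than $R\subset W$ when $\eta\neq-1$, and one must identify the composite $\pr_\eta^{a_1}\circ\pr_1^{a_0}$ as the linear witness whose vanishing on $R$ cuts $W$ out of $V$. Once this witness is in place the remainder is explicit but mechanical bookkeeping in the six-dimensional algebra $\fournp{\eta}$.
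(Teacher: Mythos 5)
Your proof is correct; I checked the key computations in $\fournp{\eta}=\hthreec{\eta}\oplus\twoa$ — $\eisp{\ha_0+s_0}{\eta}=\mathbb{F}(\ha_1-\ha_{-1})$, $\pr_\eta^{a_0}(a_0-a_2)=\tfrac12(a_1-a_{-1})$, the bases of $V$ and $W$, and $\pr_\eta^{a_1}\bigl(\pr_1^{a_0}(r)\bigr)=\tfrac{(\eta+1)(\alpha+\beta)}{2}(a_0-a_2)$ — and they all hold. Your skeleton coincides with the paper's (ideal $\Rightarrow$ stable under the eigenprojections $\Rightarrow$ trivial intersection with both $\eta$-eigenspaces $\Rightarrow$ contained in the sum of $0$- and $1$-eigenspaces $\Rightarrow$ joint decomposition; converse via the one-dimensional joint eigenspaces being ideals), but the decisive middle step is executed differently. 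The paper writes down no bases: it invokes the Seress condition — since $\eisp{\ha_i+s_i}{0}\,\eisp{\ha_i+s_i}{1}=0$, one has $(\ha_i+s_i)(xy)=x((\ha_i+s_i)y)$ for $x$ in the $0\oplus1$ part — to make the two adjoint maps commute on the ideal, and then splits any $x\in R$ by the partition of unity into the four joint projections (the signs in the paper's final display are off, but the intended identity $x=(1-\alpha)(1-\beta)x+(1-\alpha)\beta x+\alpha(1-\beta)x+\alpha\beta x$ is what is used). You instead compute the common fixed space $V$ of the Miyamoto involutions and the joint-eigenspace sum $W$ explicitly, and use the composite $\pr_\eta^{a_1}\circ\pr_1^{a_0}$ as the linear witness cutting $R$ from $V$ down to $W$ when $\eta\neq-1$, with $V=W$ when $\eta=-1$. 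Your route is more computational but self-contained and makes transparent where $\eta=-1$ changes the picture (the appearance of $\mathbb{F}\hat q$ as a joint $0$-eigenspace), whereas the paper's Seress argument avoids coordinates and would carry over to settings where explicit structure constants are unavailable; your explicit transfer step deducing $R\cap\eisp{a_1}{\eta}=0$ from $a_1-a_{-1}\notin R$ also usefully replaces the paper's terser appeal to invariance of $R$ under the involutions. One cosmetic slip: ``every element of $W$ is a simultaneous eigenvector'' should read ``$W$ is spanned by simultaneous eigenvectors''; the successive-projection argument that follows is unaffected.
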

\begin{proof}
Let $\iota_0,\iota_1\in\{0,1\}$ and $x\in\eisp{\ha_0+s_0}{\iota_0}\cap\eisp{\ha_1+s_1}{\iota_1}$.
Then $x\in\eisp{\ha_{-1}+s_0}{\iota_0}\cap\eisp{\ha_{-1}+s_1}{\iota_1}$ by the invariance of $x$.
Since
\begin{eqnarray*}
\iota_0\iota_1 x&&=(\ha_0+s_0)((\ha_1+s_1)x)\\
&&=((\ha_0+s_0)(\ha_1+s_1))x\\
&&=(-\frac{1}{2}(\hat{q}+(\eta+1)(s_0+s_1))+\frac{\eta+1}{2}((\ha_0+s_0)+(\ha_1+s_1))+\frac{1-\eta}{2}\ha_{-1})x\\
&&=\frac{\eta+1}{2}(\iota_0+\iota_1-1)x+\frac{1-\eta}{2}\ha_{-1}x,
\end{eqnarray*}
$$\ha_{-1}x=\frac{1}{\eta-1}(-2\iota_0\iota_1+(\eta+1)(\iota_0+\iota_1+1))x$$
and thus
$$s_ix=\frac{1}{\eta-1}(2\iota_i\iota_j-2\iota_i-(\eta+1)(\iota_j+1))x$$
where $\{i,j\}=\{0,1\}$.
Thus $\mathbb{F}x$ is an ideal of $M$.
So $R$ is an ideal of $M$ if
$$R=\bigoplus_{\iota_0,\iota_1\in\{0,1\}}(\eisp{\ha_0+s_0}{\iota_0}\cap\eisp{\ha_1+s_1}{\iota_1}\cap R).$$

Assume that $R$ is an ideal of $M$ such that $a_1-a_{-1}\notin R$.
By considering the eigenspaces of $M/R$, we can easily show that $R$ is invariant under Miyamoto involutions.

Assume that $R\cap\eisp{\ha_1+s_1}{\eta}\neq0$.
Then, by the invariance of $R$, $\ha_1-\ha_{-1}\in R$.
So $R\cap\eisp{\ha_1+s_1}{\eta}=0$.
Recall that for $x\in R$ and $i\in\{0,1\}$,
$$\frac{1}{\eta(\eta-1)}(\ha_i+s_i)(\ha_i+s_i-1)x\in\eisp{\ha_i+s_i}{\eta}$$
and
$$x-\frac{1}{\eta(\eta-1)}(\ha_i+s_i)((\ha_i+s_i-1)x)\in\eisp{\ha_i+s_i}{0}\oplus\eisp{\ha_i+s_i}{1}.$$
Since $R\cap\eisp{\ha_i+s_i}{\eta}=0$ for $i\in\{0,1\}$, 
$$R\subset(\eisp{\ha_0+s_0}{0}\oplus\eisp{\ha_0+s_0}{1})\cap(\eisp{\ha_1+s_1}{0}\oplus\eisp{\ha_1+s_1}{1}).$$
Since $\eisp{\ha_i+s_i}{0}\eisp{\ha_i+s_i}{1}=\emptyset$ for $i\in\{0,1\}$, it is known that
$$(\ha_i+s_i)(xy)=x((\ha_i+s_i)y)$$
for all $i\in\{0,1\}$, $x\in\eisp{\ha_i+s_i}{0}\oplus\eisp{\ha_i+s_i}{1}$ and $y\in M$.
This condition is called Seress condition.
Thus, 
$$(\ha_0+s_0)((\ha_1+s_1)x)=((\ha_0+s_0)(\ha_1+s_1))x=(\ha_1+s_0)((\ha_1+s_1)x)$$
for all $x\in(\eisp{\ha_0+s_0}{0}\oplus\eisp{\ha_0+s_0}{1})\cap(\eisp{\ha_1+s_1}{0}\oplus\eisp{\ha_1+s_1}{1})$.
Thus, for $x\in R$,
$$(\ha_0+s_0-1)((\ha_1+s_1-1)x)\in\eisp{\ha_0+s_0}{1}\cap\eisp{\ha_1+s_1}{1}\cap R,$$
$$(\ha_0+s_0-1)((\ha_1+s_1)x)\in\eisp{\ha_0+s_0}{1}\cap\eisp{\ha_1+s_1}{0}\cap R,$$
$$(\ha_0+s_0)((\ha_1+s_1-1)x)\in\eisp{\ha_0+s_0}{0}\cap\eisp{\ha_1+s_1}{1}\cap R$$
and
$$(\ha_0+s_0)((\ha_1+s_1)x)\in\eisp{\ha_0+s_0}{0}\cap\eisp{\ha_1+s_1}{0}\cap R.$$
Furthermore,
\begin{eqnarray*}
x=&&(\ha_0+s_0-1)((\ha_1+s_1-1)x)+(\ha_0+s_0-1)((\ha_1+s_1)x)\\
&&+(\ha_0+s_0)((\ha_1+s_1-1)x)+(\ha_0+s_0)((\ha_1+s_1)x).
\end{eqnarray*}
Thus
$$R\subset\bigoplus_{\iota_0,\iota_1\in\{0,1\}}(\eisp{\ha_0+s_0}{\iota_0}\cap\eisp{\ha_1+s_1}{\iota_1}\cap R).$$

\end{proof}

\begin{proof}[Proof of Lemma \ref{lemquo}]
Let
$$r=\hat{q}-(\ha_{-1}+\ha_1+\ha_0+s_0).$$
Then 
$$\mathbb{F}s_0=\eisp{\ha_0+s_0}{1}\cap\eisp{\ha_1+s_1}{0},$$
$$\mathbb{F}s_1=\eisp{\ha_0+s_0}{0}\cap\eisp{\ha_1+s_1}{1}$$ 
and 
$$\mathbb{F}r=\eisp{\ha_0+s_0}{1}\cap\eisp{\ha_1+s_1}{1}.$$
Furthermore, 
$$\eisp{\ha_0+s_0}{0}\cap\eisp{\ha_1+s_1}{0}=0$$
if $\eta\neq-1$ and
$$\eisp{\ha_0+s_0}{0}\cap\eisp{\ha_1+s_1}{0}=\mathbb{F}\hat{q}$$
if $\eta=-1$.
So an ideal $R$ such that $a_1-a_{-1}\notin R$ is spanned by a subset of $\{s_0,s_1,r\}$ if $\eta\neq-1$ and spanned by a subset of $\{s_0,s_1,r,\hat{q}\}$ if $\eta=-1$.

Assume that $\ha_1-\ha_{-1}\in R$.
Since $R/(\ha_1-\ha_{-1})\cong\htwoa$, $M/R$ is isomorphic to a quotient of $\twoa$.
Then  by Proposition \ref{qtwoa}, a quotient of $\htwoa$ is isomorphic to $\htwoa$, $\honea$, $\twoa$ or $\onea$

Then the proposition holds and the classification of 2-generated $\mathcal{F}_\Phi$-axial algebras with $\Phi\neq\{0,1\}$ and $\eta\neq\frac{1}{2}$ is completed.
\end{proof}

\end{document}